\newcommand{\nc}{\newcommand}
\theoremstyle{plain}
\newtheorem{theorem}{\sc Theorem}[section]
\newtheorem{thm}[theorem]{\sc Theorem}
\newtheorem{lem}[theorem]{\sc Lemma}
\newtheorem{prop}[theorem]{\sc Proposition}
\newtheorem{cor}[theorem]{\sc Corollary}
\newtheorem{rem}[theorem]{\sc Remark}
\newtheorem{ex}[theorem]{\sc Example}
\theoremstyle{definition}
\renewcommand{\refname}{\center{REFERENCES}}
\newcommand{\vfi}{\varphi}
\newcommand{\N}{\mathbb{N}}
\newcommand{\Z}{\mathbb{Z}}
\newcommand{\Nabla}{\bigtriangledown}
\nc{\GG}{\mathfrak{G}}
\nc{\LL}{\mathcal{L}}
\nc{\JJ}{\mathcal{J}}
\nc{\fN}{\mathfrak{N}}
\nc{\fH}{\mathfrak{H}}
\nc{\fF}{\mathfrak{F}}
\nc{\cH}{\mathcal{H}}
\nc{\cG}{\mathcal{G}}
\nc{\cN}{\mathcal{N}}
\nc{\cK}{\mathcal{K}}
\nc{\cT}{\mathcal{T}}
\nc{\pgs}{\mathfrak{p}\mathfrak{g}\mathfrak{s}}
\nc{\pcr}{\mathfrak{p}\mathfrak{c}\mathfrak{r}}
\nc{\wg}{\widehat{g}}
\nc{\wh}{\widehat{h}}
\nc{\wx}{\widehat{x}}
\nc{\wy}{\widehat{y}}
\nc{\wk}{\widehat{k}}
\nc{\Char}{\operatorname{char}}
\nc{\Ker}{\operatorname{Ker}}
\nc{\Imm}{\operatorname{Im}}
\nc{\Aut}{\operatorname{Aut}}
\nc{\Cl}{\operatorname{Cl}}
\nc{\Orb}{\operatorname{Orb}}
\nc{\noreq}{\trianglelefteq}
\nc{\lcm}{\operatorname{lcm}}
\title[Structural and Closure Properties for the $q$-Tensor Product]
{Some Structural and Closure Properties of an Extension of the $q$-Tensor Product of Groups, $q \geq 0$}
\author[Dias]{Ivonildes Ribeiro Martins Dias}
\address{Institute of Mathematics and Statistics, 
Universidade Federal de Goi\'as, Goi\^ania-GO, 74001-970 Brazil}
\email{ivonildes@ufg.br}
\author[Rocco]{Nora\'i Romeu Rocco*}
\address{Departamento de Matem\'atica-IE, Universidade de Bras\'ilia,
Bras\'ilia-DF, 70910-900 Brazil}
\email{norai@unb.br}
\thanks{(*) The author acknowledges partial financial support from FAPDF, Brazil, during the preparation of this
work.}
\author[Rodrigues]{Eunice C\^andida Pereira Rodrigues}
\address{Institute of Exact and Natural Sciences, Universidade Federal de Rondon\'opolis, 78730-614 Rondon\'opolis-MT, Brazil}
\email{eunicecpr@hotmail.com}
\subjclass[2010]{20F45, 20E26, 20F40}
\keywords{Non-abelian tensor square; q-tensor product, finiteness conditions}
\date{\today}
\begin{document}

\begin{abstract}
In this work we study some structural properties of the group $\eta^q(G, H)$, $q$ a non-negative integer, which is an extension of the $q$-tensor product $G \otimes^q H$, where $G$ and $H$ are normal subgroups of some group $L$. We establish by simple arguments some closure properties of $\eta^q(G, H)$ when $G$ and $H$ belong to certain Schur classes. This extends similar results concerning the case $q = 0$ found in the literature. Restricting our considerations to the case $G = H$, we compute the $q$-tensor square $D_n \otimes^q D_n$ for $q$ odd, where $D_n$ denotes the dihedral group of order $2n$. Upper bounds to the exponent of $\eta^q(G, G)$ are also established for nilpotent groups $G$ of class $\leq 3$, which extend to all $q \geq 0$ similar bounds found by Moravec in \cite{Moravec}.    
\end{abstract}

\maketitle

\section{Introduction} 

Let $G$ and $H$ be groups each of which acts upon the other (on the right)  
and upon themselves by conjugation, in a compatible way, that is, 
\begin{equation}   \label{eq:0}
g^{\left( h^{g_1} \right) } = \left( \left( g^{g^{-1}_1}  \right) ^h \right) ^{g_1} \; \; \mbox{and} \; \; h^{\left( g^{h_1}\right) } =
\left( \left( h^{h_1^{-1}} \right) ^g \right) ^{h_1},
\end{equation}
for all $g, g_1 \in G$ and $h, h_1 \in H$. In this situation, the \textit{non-abelian tensor product} $G \otimes H$ of $G$ and $H$, as defined by Brown and Loday in \cite{BL}, is the group generated by the symbols $g \otimes h$, where $g \in G$ and $h \in H$, subject to the defining relations 
\begin{equation} 
gg_{1}\otimes h = (g^{g_{1}}\otimes h^{g_{1}})(g_{1}\otimes h) \quad \text{and} \quad g\otimes hh_{1}= (g\otimes h_{1})(g^{h_{1}}\otimes h^{h_{1}}),
\end{equation}
where $g,g_1 \in G$ and $h,h_1 \in H$.

Brown and Loday \cite{BL} gave a topological significance for the non-abelian tensor product of groups. They used it to describe the third relative homotopy group of a triad as a non-abelian tensor product of the second relative homotopy groups of appropriate subspaces (see also \cite{BRV}). 
When $G = H$ and all actions are by conjugation in $G$, then the
group $G \otimes G$ is called the \textit{non-abelian tensor square} of $G$. The commutator map induces a homomorphism 
$\kappa: G \otimes G \to G$, such that $g \otimes h \mapsto [g, h],$ for all $g, h \in G,$ whose kernel is usually denoted by $J_2(G).$ Its topological interest is given by the isomorphism (Cf. \cite{BL1,BL}):
\[\pi_3SK(G,1) \cong J_2(G),\]
where $\pi_3 SK(G,1)$ is the third homotopy group of the suspension of the Eilenberg-MacLane space $K(G,1)$. 

Non-abelian tensor products of groups have been studied by a number of authors. In \cite{Rocco1} the second author derived some  properties of the non-abelian tensor square of a group $G$ via its embedding in a larger group, $\nu(G)$, defined as follows. Let 
$G^\varphi$ be an isomorphic copy of $G$ via an isomorphism 
$\varphi: G \to G^\varphi$, such that $g \mapsto g^{\varphi}$, for all $g \in G$. Then, $\nu(G)$ is defined to be the group 
\[ \nu(G):= \langle G \cup G^\varphi \; \vert \; [g,h^\varphi]^k =[g^k,(h^k)^\varphi]=[g,h^\varphi]^{k^\varphi}\rangle, \; \text{for all} \; g, h, k \in G.\]

Besides its intrinsic group theoretical interest, the motivation in introducing $\nu(G)$ is that its subgroup $[G, G^\varphi]$ is naturally isomorphic with the non-abelian tensor square, $G \otimes G$. Independently, Ellis e Leonard \cite{EL} introduced a similar construction. 

Following \cite{Rocco1} and \cite{EL}, Nakaoka \cite{Nak} extended the operator $\nu$ to an operator $\eta$, for the case of two groups $G$ and $H$ acting compatibly on one another. To this end it is considered an isomorphic copy $H^\varphi$ of $H$, where $\varphi : h \mapsto h^\varphi$, for all $h \in H$. For any $x, x_1 \in G$ and $y, y_1 \in H,$ set
\begin{align} 
s_1(x, y, x_1) &= [x, {y}^{\varphi}]^{x_1} \cdot [{x}^{x_1},({y}^{x_1})^{\varphi}]^{-1}, \label{rel:s1} \\
s_2(x, y, y_1) &= [x, {y}^{\varphi}]^{y^{\varphi}_1} \cdot [{x}^{y_1},({y}^{y_1})^{\varphi}]^{-1}. \label{rel:s2} 
\end{align}
Let $S_1 = \{s_1(x, y, x_1) \, \vert \, x, x_1 \in G, \; y \in H \}$, $S_2 = \{s_2(x, y, y_1) \, \vert \, x \in G, \; y, y_1 \in H \}$ and $S = S_1 \cup S_2$. Then the group $\eta(G, H)$ is defined by (Cf. \cite{Nak}):
\begin{equation} \label{def:eta}
      \eta(G, H) = \langle G, H^{\varphi} \, | \, S  \rangle,
     \end{equation}
the factor group of the free product $G \ast H^{\varphi}$ by its normal subgroup generated by $S$. It follows from \cite[Proposition 1.4]{GH} that the map $g \otimes h \mapsto [g, h^{\vfi}]$ gives rise to an isomorphism from $G \otimes H$ onto the subgroup $[G, H^{\vfi}] \triangleleft \eta(G,H)$. When $G = H$ and all actions are by conjugation then $\eta(G, H)$ becomes the group $\nu(G).$ 

Ellis and Rodr\'iguez-Fern\'andez \cite{ER89}, Brown \cite{Brown}, and Conduch\'e and Rodr\'iguez-Fern\'andez \cite{Cond} started the investigation of a modular version of the non-abelian tensor product. In \cite{Ellis} Ellis considered the so called $q$-tensor product $G \otimes^q H$, of $G$ and $H$, where $q$ is a non-negative integer, in the case where $G$ and $H$ are normal subgroups of a larger group $L$ and all actions are by conjugation in $L$. In this situation the q-tensor product, $G {\otimes}^q H$, is defined to be the group generated by the symbols $g \otimes h$ and $\widehat{k}$, for $g \in G$, $h \in H$ and $k \in G \cap H$, subject to the following relations (for all $g, g_1 \in G$, $h, h_1 \in H$ and $k, k_1 \in G \cap H$), where for elements
$x, \, y \in L$ the conjugate of $x$ by $y$ is written $x^y =
y^{-1}xy$ and the commutator of $x$ and $y$ is $[x, \, y] =
x^{-1}x^y$:
  \begin{align} 
  g \otimes hh_1 &= (g \otimes h_1)(g^{h_1} \otimes h^{h_1}); \label{rel1} \\
  gg_1 \otimes h &= (g^{g_1} \otimes h^{g_1})(g_1 \otimes h); \label{rel2} \\
  (g \otimes h)^{\widehat{k}} &= g^{(k^q)} \otimes h^{(k^q)}; \label{rel3} \\
 \widehat{k k_1} &= \widehat{k} \displaystyle{\prod_{i=1}^{q-1}}(k \otimes
  (k^{-i}_1)^{k^{q-1-i}})\widehat{k_1}; \label{rel4} \\
  [\widehat{k},\widehat{k_1}] &= k^q \otimes k^q_1; \label{rel5} \\
 \widehat{[g,h]} &= (g \otimes h)^q. \label{rel6} 
  \end{align}
For $q=0$ the $0$-tensor product $G {\otimes}^0 H$ is
just the non-abelian tensor product $G \otimes H$ (cf.
\cite{BL}), that is, the group generated by the symbols $g
\otimes h$, for $g \in G$, $h \in H$, subject to the relations
$(\ref{rel1})$ and $(\ref{rel2})$ only. When $G = H$ then we get the $q$-tensor square, $G \otimes^q G$. 

In \cite{Brown} Brown showed that if $G$ 
is a $q$-perfect group, that is, $G$ is generated by commutators and  $q$-th powers, then the (unique) universal $q$-central extension of $G$ is isomorphic with 
\[1\to H_2(G,\mathbb Z_q) \to G \otimes^q G \to G \to 1.\] 

In this paper we address some structural properties of the group $\eta^q(G, H),$ an extension of $G \otimes^q H$ by $G \times H$ which also generalizes in a certain sense the group $\eta(G, H)$ in the particular situation where $G$ and $H$ are normal subgroups of a larger group $L$. The group $\eta^{q}(G, H)$ first appeared in \cite{Ellis1} using a slightly different approach.  

Notation is fairly standard (see for instance \cite{Rob}); as usual we write $x^y$ to mean the conjugate $y^{-1} x y$ of $x$ by $y$; the commutator of $x$ and $y$ is then $[x, y] = x^{-1} y^{-1} x y$. Our commutators are \textit{left normed}, that is,  $[x, y , z] = [[x, y], z]$. 

The paper is organized as follows. In Section 2 we briefly describe the group 
$\eta^q(G, H)$ and treat of some basic structural results. In Section 3 we prove Theorem~\ref{thm:finiteness}, where we address some closure properties of $\eta^q(G,H)$, extending similar results concerning $\nu^q(G)$ and the $q$-tensor square found in \cite{BR} and elsewhere. In Section 4 we concentrate on polycyclic groups and present some computations. Finally, in Section 5 we prove Theorem~\ref{thm:expclass3}, were we establish upper bounds to the exponent of $G \otimes^q G$ for nilpotent groups of at most class 3, extending to all $q \geq 0$ similar bounds found by Moravec~\cite{Moravec} in the case $q = 0$. 

{\bf Acknowledgements:} 
The authors are very grateful to Raimundo Bastos for the interesting discussions and suggestions on the best approach to these results.

\section{Basic Structural Results} \label{sec:2}

We begin this section by giving a brief description of the group 
$\eta^q(G, H).$ To this end we assume that $G$ and $H$ are normally embedded in a larger group $L$ and that all actions are by conjugation in $L$. For $q \geq 1$ let ${\cal K} = G\cap H$ and let $\widehat{\cal K}=\left\{\widehat k \vert k\in {\cal K}\right\}$ be a set of symbols, one for each element of ${\cal K}$ (for $q=0$ we set  $\widehat{\cal K}=\emptyset$, the empty set). Let $F(\widehat{\cal K})$ be the free group on  
$\widehat{\cal K}$ and $\eta(G,H)*F(\widehat{\cal K})$ be the free product of $\eta(G,H)$ and $F(\widehat{\cal K})$. Since 
$G$ and $H^\varphi$ are embedded into $\eta(G,H)$, we shall identify the elements of $G$ (respectively of $H^\varphi$) with their respective images in $\eta(G,H)*F(\widehat{\cal K})$. Let $J$ be the normal closure in  
$\eta(G,H)*F(\widehat{\cal K})$ of the following elements, for all $k, k_1 \in {\cal K}, g \in G$ and $h\in H$:
\begin{eqnarray} \label{reta1}
g^{-1}\widehat{k}g(\widehat{k^g})^{-1};
\end{eqnarray}
\begin{eqnarray} \label{reta2}
(h^\varphi)^{-1}\widehat{k}{h^\varphi}(\widehat{k^h})^{-1};
\end{eqnarray}
\begin{eqnarray} \label{reta3}
(\widehat{k})^{-1}\left[ g,h^\varphi \right]\widehat{k}\left[ g^{k^q},(h^{k^q})^\varphi \right]^{-1};
\end{eqnarray}
\begin{eqnarray}\label{reta4}
(\widehat {k})^{-1}\widehat{kk_{1}} (\widehat{k_1})^{-1}\left(\displaystyle \prod _{i=1}^{q-1} \left[k,(k_{1}^{-i})^\varphi \right]^{k^{q-1-i}}\right)^{-1}; 
\end{eqnarray}
\begin{eqnarray} \label{reta5}
\left[ \widehat{k}, \widehat{k_{1}}\right]\left[k^{q},({k_{1}}^{q})^\varphi \right]^{-1};
\end{eqnarray}
\begin{eqnarray}\label{reta6}
\widehat{\left[ g,h \right]} \left[ g, h^\varphi \right] ^{-q}.
\end{eqnarray}

According to \cite{BR} (see also \cite{Ellis}), the group $\eta^q(G,H)$ is then defined to be the factor group
\begin{equation} \label{def:etaq} 
\eta^q(G,H):=\left(\eta(G,H)*F({\cal K})\right)/J.
\end{equation}

For $q=0$ the set of relations from (\ref{reta1}) to (\ref{reta6}) is empty; in this case we have 
$\left(\eta(G,H)*F(\widehat{\cal K})\right)/J\cong \eta(G,H)$. Also, for $G=H=L$ we get that $\eta^q(G,G) \cong \nu^q(G)$, which becomes the group $\nu(G)$ if $q=0$. 

There is an epimorphism $\rho: \eta^q(G,H) \to GH$, $g \mapsto g$, $h^\varphi \mapsto h$, $\widehat k \mapsto k^q$. On the other hand, the immersion of $G$ into  $\eta(G,H)$ induces a homomorphism $i: G \to \eta^q(G,H)$. We have that 
$g^{i\rho}=g$ and thus $i$ is injective. Similarly, the immersion of $H^\varphi$ into $\eta(G,H)$ induces a homomorphism 
$j: H^\varphi \to \eta^q(G,H)$. Hence, the elements $g \in G$ and $h^\varphi \in H^\varphi$ are identified with their respective images $g^i$ and $(h^\varphi)^j$ in $\eta^q(G,H)$. 

We write $K$ to denote the subgroup of $\eta^q(G,H)$ generated by the images of 
$\widehat{{\cal K}}$. By relations 
\eqref{reta1} and \eqref{reta2}, $K$ is normal in $\eta^q(G,H)$ and, by relations \eqref{rel:s1}, \eqref{rel:s2} and \eqref{reta3}, the subgroup $T:=[G, H^\varphi]$ is normal in $\eta^q(G,H).$ Consequently, $\Upsilon^q(G, H):=[G, H^\varphi] K$ is a normal subgroup of $\eta^q(G,H)$. 

By the above considerations we obtain 
\begin{eqnarray} \label{eq:structure}
\eta^q(G, H) = (\Upsilon^q(G, H) \cdot G) \cdot H^\varphi, 
\end{eqnarray}
where the dots indicate (internal) semi-direct products. 

Besides its intrinsic interest as a group theoretical construction, one of the main motivations to introduce and study the group $\eta^q(G, H)$ is the canonical ``hat'' (power) and commutator approach to the $q$-tensor product 
$G \otimes^q H$.   

In effect, an adaptation of the proof of \cite[Proposition 2.9]{BR} can be easily carried out (see also \cite[Theorem 8]{Ellis}) to give us the following: 
\begin{prop} \label{prop:iso1}
There is an isomorphism $\Upsilon^q(G, H)  \cong G \otimes^q H$ such that 
$[g, h^\varphi] \mapsto g \otimes h$ and $\hat{k} \mapsto \hat{k}$, for all $g \in G, h \in H$ and $k \in \mathcal{K}$. 
\end{prop}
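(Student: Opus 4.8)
The plan is to construct mutually inverse homomorphisms between $G \otimes^q H$ and $\Upsilon^q(G, H)$, following the template of \cite[Proposition 2.9]{BR} for the case $G = H$. First I would set up the forward map $\Psi \colon G \otimes^q H \to \Upsilon^q(G, H)$ determined on generators by $g \otimes h \mapsto [g, h^\varphi]$ and $\widehat{k} \mapsto \widehat{k}$ (the latter being the image of $\widehat{k}$ in $\eta^q(G,H)$). To show $\Psi$ is a well-defined homomorphism, I would verify that the images satisfy each defining relation (\ref{rel1})--(\ref{rel6}) of $G \otimes^q H$. Relations (\ref{rel1}) and (\ref{rel2}) follow from the defining relations $s_1, s_2$ of $\eta(G, H)$: expanding $[g g_1, h^\varphi]$ and $[g, (h h_1)^\varphi]$ by the standard commutator identities $[ab,c] = [a,c]^b[b,c]$ and $[a,bc] = [a,c][a,b]^c$ and then applying \eqref{rel:s1}, \eqref{rel:s2} reproduces exactly the tensor relations. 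The remaining relations (\ref{rel3})--(\ref{rel6}) are, under the identification $g \otimes h \leftrightarrow [g, h^\varphi]$, precisely the relations \eqref{reta3}--\eqref{reta6} imposed in the definition of $\eta^q(G, H)$ (note $[g,h] \in \mathcal{K}$, so $\widehat{[g,h]}$ in \eqref{reta6} is legitimate). Since the images $[g, h^\varphi]$ and $\widehat{k}$ generate $T = [G, H^\varphi]$ and $K$ respectively, and $\Upsilon^q(G, H) = [G, H^\varphi] K$, the map $\Psi$ is surjective.

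The substantive part is injectivity, which I would establish by exhibiting a retraction. Using the compatible conjugation actions of $G$ and $H$ in $L$, I would equip $G \otimes^q H$ with the natural actions $(g_1 \otimes h)^g = g_1^{\,g} \otimes h^{\,g}$, $(g_1 \otimes h)^{h_1} = g_1^{\,h_1} \otimes h^{\,h_1}$, $\widehat{k}^{\,g} = \widehat{k^{\,g}}$, $\widehat{k}^{\,h} = \widehat{k^{\,h}}$, after checking that these extend to automorphisms of $G \otimes^q H$. I would then build an explicit model group $\mathcal{E}$ on the underlying set $(G \otimes^q H) \times G \times H$ whose multiplication encodes these actions together with the prescription that the mixed commutator $[g, h]$ equals the base element $g \otimes h$. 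Here lies the main obstacle: the copies of $G$ and $H$ inside $\mathcal{E}$ must \emph{not} centralize one another, since their commutator has to realize the tensor symbol, so $\mathcal{E}$ is not a plain iterated semidirect product, and proving that the proposed multiplication is associative (equivalently, that every relation \eqref{reta1}--\eqref{reta6} and every defining relation of $\eta(G,H)$ maps to the identity) is the delicate computational core of the argument.

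With $\mathcal{E}$ in hand, I would define $\Phi \colon \eta^q(G, H) \to \mathcal{E}$ by $g \mapsto (1, g, 1)$, $h^\varphi \mapsto (1, 1, h)$ and $\widehat{k} \mapsto (\widehat{k}, 1, 1)$; the verification that $\Phi$ is well defined is exactly the associativity check of the previous paragraph. Composing $\Psi$ with the restriction of $\Phi$ to $\Upsilon^q(G, H)$ and then with the projection $\pi \colon \mathcal{E} \to G \otimes^q H$ onto the first coordinate, I would read off on generators that $g \otimes h \mapsto [g, h^\varphi] \mapsto [(1,g,1),(1,1,h)] \mapsto g \otimes h$ and $\widehat{k} \mapsto \widehat{k} \mapsto (\widehat{k},1,1) \mapsto \widehat{k}$, so that $\pi \circ \Phi \circ \Psi = \mathrm{id}_{G \otimes^q H}$. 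Hence $\Psi$ is injective, and therefore an isomorphism $G \otimes^q H \xrightarrow{\sim} \Upsilon^q(G, H)$ carrying $g \otimes h$ to $[g, h^\varphi]$ and $\widehat{k}$ to $\widehat{k}$, as claimed.
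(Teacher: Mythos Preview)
Your proposal is correct and follows precisely the route the paper indicates: the paper does not give a self-contained proof but simply says that an adaptation of \cite[Proposition~2.9]{BR} (together with \cite[Theorem~8]{Ellis}) yields the result, and your sketch is exactly that adaptation. One minor remark: the auxiliary group $\mathcal{E}$ you describe \emph{is} in fact an iterated semidirect product $(G\otimes^q H)\rtimes G)\rtimes H$ --- this is what the paper records in \eqref{eq:structure} --- the subtlety you correctly flag being that the $H$-action on the $G$-factor is not bare conjugation in $L$ but is twisted so that $g^{h}$ lands on $(g\otimes h)^{g^{-1}}\cdot g$ in the $(G\otimes^q H)\cdot G$ coordinates; verifying that this twisted action really defines automorphisms and that the resulting group satisfies all of \eqref{rel:s1}, \eqref{rel:s2}, \eqref{reta1}--\eqref{reta6} is indeed the computational core, just as in \cite{BR}.
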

This approach provides us not only with more psychological comfort but it also brings computational advantages by treating tensors as commutators in a larger group (see for instance, \cite{EL}, \cite{Rocco}, \cite{BR}, \cite{EN}, \cite{RR}, \cite{DR}). 

The $q$-exterior product $G \wedge^q H$ is defined to be the quotient of $G \otimes H$ by its (central) subgroup 
$\Nabla^q(G, H) :=  \langle k \otimes k \; \vert \; k \in \mathcal{K} \rangle.$ According to our approach we write  
\begin{eqnarray} \label{eq:ext}
\Delta^q(G, H) := \langle [k, k^\varphi] \, \vert \, k \in \mathcal{K} \rangle \qquad \text{and} \qquad \tau^q(G, H):= \frac{\eta^q(G, H)}{\Delta^q(G, H)},
\end{eqnarray}
so that 
\begin{eqnarray} \label{eq:ext1}
 G \wedge^q H \cong \frac{\Upsilon^q(G, H)}{\Delta^q(G, H)}.
\end{eqnarray}

In the following Lemma we collect some basic consequences of the defining relations of 
$\eta^q(G,H)$; their proofs can be easily adapted from \cite[Lemma 2.4]{BR} and are omitted. 

\begin{lem} \cite[Lemma 2.4]{BR} \label{lem:basic} 
Let $G$ and $H$ be normal subgroups of a group $L$ and $q \geq 0$. Then the following relations hold in $\eta^q(G,H)$, for all $g, g_1 \in G, \; h, h_1 \in H \; \text{and} \; k, k_1 \in {\cal K}$.
	\begin{itemize}
		\item[(i)] $[g,h^\varphi]^{[g_1, h_1^\varphi]}=[g, h^\varphi]^{[g_1, h_1]}$; 
		\item[(ii)]$[g, h^\varphi, h^\varphi_1]=[g,h, h^\varphi_1]$; \ $[g_1, [g, h^\varphi]] = [g_1, [g,h]^\varphi].$ In particular, $[g, h^\varphi, k^\varphi]=[g,h, k^\varphi]=[g, h^\varphi, k] = [[g, h]^\varphi, k]$;
		\item[(iii)] If $k \in {\cal K}'$ (or if $k_1 \in {\cal K}'$) then $[k,k_1^\varphi][k_1,k^\varphi]=1$;
		\item[(iv)] $[\widehat{k},[g,h]]=[\widehat{k},[g,h^\varphi]] = [\widehat{k},[g,h]^\varphi] = 
		[k^q, [g, h^\varphi]] = [(k^q)^\varphi, [g, h^\varphi]]$;
		\item[(v)] $[\widehat{k}, h^\varphi] = [k^q, h^\varphi], \; [g, \widehat{k}] = [g, (k^q)^\varphi]$; 
		\item[(vi)] If $[k, k_1]=1$ then $[k,k_1^\varphi]$ and 
		$[k_1, k^\varphi]$ are central elements in $\eta^q(G,H)$ and they have the same finite order dividing $q$. If in addition $k$ and $k_1$ are torsion elements of orders $o(k)$ and $o(k_1)$, respectively, then the order of $[k, k_1^\varphi]$ divides $\gcd(q, o(k),o(k_1))$;
		\item[(vii)] $[k,k^\varphi]$ is central in  $\eta^q(G,H)$, for all $k\in {\cal K}$;
		\item[(viii)]$[k,k_1^\varphi][k_1, k^\varphi]$ is central in  
		$\eta^q(G,H)$;
		\item[(ix)] $[k, k^\varphi] = 1$, for all $k\in {\cal K}'$;
		\item[(x)] If $[k,g] = 1 = [k, h]$ then 
		$[g,h,k^\varphi]= 1 = [[g,h]^\varphi, k]$.
	\end{itemize}
 \end{lem}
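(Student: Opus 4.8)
The plan is to derive all ten identities from three ``action'' reformulations of the defining relations, after which each item reduces to commutator calculus together with the power relations \eqref{reta4}--\eqref{reta6}. The first step is to read \eqref{rel:s1}, \eqref{rel:s2}, \eqref{reta1}, \eqref{reta2} and \eqref{reta3} as the statements that conjugation on the normal subgroups $T=[G,H^\varphi]$ and $K$ is \emph{diagonal}, i.e.\ it realises the conjugation action inside $L$:
\begin{gather*}
[g,h^\varphi]^{x_1}=[g^{x_1},(h^{x_1})^\varphi],\quad
[g,h^\varphi]^{y_1^\varphi}=[g^{y_1},(h^{y_1})^\varphi],\\
[g,h^\varphi]^{\widehat{k}}=[g^{k^q},(h^{k^q})^\varphi],\quad
\widehat{k}^{\,g}=\widehat{k^{g}},\quad
\widehat{k}^{\,h^\varphi}=\widehat{k^{h}}.
\end{gather*}
Granting these, item (i) is obtained by conjugating $[g,h^\varphi]$ successively by the four syllables $g_1^{-1},(h_1^{-1})^\varphi,g_1,h_1^\varphi$ of $[g_1,h_1^\varphi]$ and recognising the net effect as conjugation by $g_1^{-1}h_1^{-1}g_1h_1=[g_1,h_1]\in\mathcal{K}\le G$; the compatibility hypothesis \eqref{eq:0} is precisely what guarantees that this composite action agrees with conjugation by $[g_1,h_1]$ in $L$. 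The same bookkeeping applied to $\widehat{k}$ yields $\widehat{k}^{\,[g,h^\varphi]}=\widehat{k^{[g,h]}}$, the identity underlying (iv) and (v).

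For (ii) I would expand the left-normed commutators and substitute the diagonal actions, using that $[g,h]\in\mathcal{K}$ and that $[g,h^\varphi]$ and $[g,h]$ have the same image under $\rho$; items (iii) and (ix) instead require the full bilinearity encoded in \eqref{rel1} and \eqref{rel2}, together with the collapse of a ``commutator tensor'' $[[a,b],\,\cdot\,]$. Items (iv) and (v) are where the hat generators enter: since \eqref{reta3} and the first displayed formula show that $\widehat{k}$, $k^q$ and $(k^q)^\varphi$ all induce the \emph{same} conjugation on $T$, they have identical commutators with every tensor, which gives (iv) at once and reduces (v) to comparing $\widehat{k}^{-1}\widehat{k^{h}}$ with $[k^q,h^\varphi]$.

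Item (vi) then splits into two parts. Centrality of $[k,k_1^\varphi]$ when $[k,k_1]=1$ is immediate from the second identity of (ii), since $[g_1,[k,k_1^\varphi]]=[g_1,[k,k_1]^\varphi]=1$ (and symmetrically for $H^\varphi$, and via (iv) for the hat generators); the order dividing $q$ comes from \eqref{reta6} in the form $[k,k_1^\varphi]^q=\widehat{[k,k_1]}=\widehat{1}=1$; and the refinement to $\gcd(q,o(k),o(k_1))$ follows by feeding $k^{o(k)}=1$ and $k_1^{o(k_1)}=1$ through bilinearity. Item (vii) is the case $k_1=k$, (viii) is obtained by symmetrising and using $[k,k_1][k_1,k]=1$ inside (ii), and (x) is clean: by \eqref{rel:s2} the hypotheses $g^k=g$, $h^k=h$ give $[g,h^\varphi]^{k^\varphi}=[g,h^\varphi]$, whence $[g,h^\varphi,k^\varphi]=1$, and (ii) converts this into both asserted forms.

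I expect the only genuinely laborious point to be the \textbf{main obstacle}: item (v), and more generally any identity that must invoke the product relation \eqref{reta4}. There the correction terms $\prod_{i=1}^{q-1}[k,(k_1^{-i})^\varphi]^{k^{q-1-i}}$ have to be shown to telescope, against \eqref{reta6}, into exactly the $q$-fold tensor $[k^q,h^\varphi]$ expanded via \eqref{rel2}. Everything else is a matter of organising the commutator expansions in the right order, so that each item may call on the ones already established.
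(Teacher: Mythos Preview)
The paper does not give its own proof of this lemma: it explicitly states that the arguments ``can be easily adapted from \cite[Lemma 2.4]{BR} and are omitted.'' Your outline is exactly that adaptation---recasting \eqref{rel:s1}, \eqref{rel:s2}, \eqref{reta1}--\eqref{reta3} as the assertion that $G$, $H^\varphi$ and $K$ act diagonally (via conjugation in $L$) on $T$ and on $K$, and then reading off each item by commutator calculus plus the power relations \eqref{reta4}--\eqref{reta6}. This is the standard route taken in \cite{BR}, and your identification of (v) (the comparison of $\widehat{k}^{-1}\widehat{k^h}$ with $[k^q,h^\varphi]$ via the correction product in \eqref{reta4} and the collapse \eqref{reta6}) as the only genuinely laborious step is accurate.

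Two small organisational remarks. First, the usual proof of (viii) does not proceed by ``symmetrising inside (ii)'' but rather by expanding the central element $[kk_1,(kk_1)^\varphi]$ with the tensor relations and invoking (vii) and (iii); your phrasing is a little too telegraphic there, and in practice (iii), (viii) and (ix) are proved together rather than independently. Second, in (vi) you should also record that $[k,k_1^\varphi]$ and $[k_1,k^\varphi]$ have the \emph{same} order: this comes from the extra identity $[k,k_1^\varphi]=[k_1,k^\varphi]^{-1}$, obtained from $1=[kk_1,(kk_1)^\varphi]$ once $[k,k_1]=1$. Neither point is a gap in your plan; they are just places where the bookkeeping is slightly more intertwined than your linear outline suggests.
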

 
The next corollary extends \cite[Corollary 2.5]{BR} to $\eta^q(G, H)$.
\begin{cor} \cite[Corollary 2.5]{BR} \label{cor:central}
If $[G, H] = 1$ then $\Upsilon^q(G,H)$ is a central subgroup of $\eta^q(G,H)$. Furthermore, in this case we have
\begin{eqnarray} \label{eq:abelian}
\Upsilon^q(G, H) \cong \frac{G}{G'G^q} \otimes_{\Z} \frac{H}{H'H^q}. 
\end{eqnarray}
\end{cor}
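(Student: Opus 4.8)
The plan is to establish centrality first and then the tensor identity, reducing the latter to a computation in $G\otimes^q H$ via Proposition~\ref{prop:iso1} and using the relations of Lemma~\ref{lem:basic}; throughout, the hypothesis $[G,H]=1$ enters by making every commutator $[g,h]$ trivial and every conjugation action trivial. For centrality I would check that each of the two kinds of generators of $\Upsilon^q(G,H)$, namely the tensors $[g,h^\varphi]$ and the hats $\widehat k$, commutes with each kind of generator $g_1\in G$, $h_1^\varphi\in H^\varphi$, $\widehat{k_1}\in K$ of $\eta^q(G,H)$; since the centre is a subgroup this suffices. Lemma~\ref{lem:basic}(ii) gives $[g,h^\varphi,h_1^\varphi]=[g,h,h_1^\varphi]=1$ and $[g_1,[g,h^\varphi]]=[g_1,[g,h]^\varphi]=1$, so the tensors commute with $H^\varphi$ and with $G$; Lemma~\ref{lem:basic}(iv) then gives $[\widehat{k_1},[g,h^\varphi]]=[k_1^{\,q},[g,h^\varphi]]=1$, because $k_1^{\,q}\in G$ already centralises the tensor. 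For the hats, relations (\ref{reta1}) and (\ref{reta2}) read $\widehat k^{\,g_1}=\widehat{k^{g_1}}$ and $\widehat k^{\,h_1^\varphi}=\widehat{k^{h_1}}$; as $k\in\mathcal K\subseteq G\cap H$ centralises both $G$ and $H$ we have $k^{g_1}=k=k^{h_1}$, so $\widehat k$ is fixed by conjugation from $G$ and $H^\varphi$. Lastly (\ref{reta5}) gives $[\widehat k,\widehat{k_1}]=[k^{q},(k_1^{\,q})^\varphi]$; since $k$ and $k_1$ commute, Lemma~\ref{lem:basic}(vi) yields $[k,k_1^\varphi]^{q}=1$, and biadditivity of the tensor (all actions being trivial) turns the right-hand side into $[k,k_1^\varphi]^{q^2}=1$. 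Thus every generator of $\Upsilon^q(G,H)$ is central.

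For the isomorphism I would pass to $G\otimes^q H$ by Proposition~\ref{prop:iso1}; by the first part this group is abelian, so all actions are trivial and relations (\ref{rel1}), (\ref{rel2}) make $g\otimes h$ biadditive. Hence $(g,h)\mapsto g\otimes h$ induces a surjection $\tfrac{G}{G'}\otimes_{\Z}\tfrac{H}{H'}\to[G,H^\varphi]$. Relation (\ref{rel6}), together with $\widehat 1=1$ and $[g,h]=1$, gives $(g\otimes h)^q=1$, so this surjection factors through $\tfrac{G}{G'G^q}\otimes_{\Z}\tfrac{H}{H'H^q}$, where I use the natural identification $\bigl(\tfrac{G}{G'}\otimes_{\Z}\tfrac{H}{H'}\bigr)\otimes_{\Z}\Z_q\cong\tfrac{G}{G'G^q}\otimes_{\Z}\tfrac{H}{H'H^q}$. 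A reverse map is produced from the universal property of $\otimes^q$: since $G/G'G^q$ and $H/H'H^q$ are abelian and annihilated by $q$, the assignment $\bar g\otimes\bar h\mapsto g\otimes h$ respects (\ref{rel1})--(\ref{rel6}); composing the two shows $[G,H^\varphi]\cong\tfrac{G}{G'G^q}\otimes_{\Z}\tfrac{H}{H'H^q}$.

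The step I expect to be the main obstacle is showing that the hat generators contribute nothing new, that is, that $\Upsilon^q(G,H)=[G,H^\varphi]$. Relation (\ref{rel4}) expresses $\widehat{kk_1}$ through $\widehat k$, $\widehat{k_1}$ and a product of tensors which, under the trivial action, collapses to $[k,k_1^\varphi]^{-q(q-1)/2}\in[G,H^\varphi]$; so modulo $[G,H^\varphi]$ the rule $k\mapsto\widehat k$ is a homomorphism of $\mathcal K$, and the real task is to see that the subgroup $K$ it generates already lies in $[G,H^\varphi]$, so that the reverse homomorphism above is defined on all of $\Upsilon^q(G,H)$ and not merely on its tensor part. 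This is precisely the delicate bookkeeping --- combining (\ref{rel4}), (\ref{rel5}) and (\ref{rel6}) and using $[G,H]=1$ decisively --- that was carried out for $\nu^q(G)$ in \cite[Corollary 2.5]{BR}, and I would adapt that argument to the present two-variable setting.
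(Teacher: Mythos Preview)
Your treatment of centrality is essentially the paper's own: both arguments use Lemma~\ref{lem:basic}(ii),(iv) to make $[G,H^\varphi]$ central, and then relations (\ref{reta1}), (\ref{reta2}), (\ref{reta5}) together with the fact that $\mathcal K=G\cap H$ is abelian to make $K$ central. The only difference is cosmetic: for $[\widehat k,\widehat{k_1}]=1$ the paper runs through relation~(\ref{reta6}) to write $[k^q,(k_1^q)^\varphi]=(\widehat{[k,k_1]})^{\,q}=1$, while you quote Lemma~\ref{lem:basic}(vi); these are equivalent.

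For the isomorphism~(\ref{eq:abelian}) the paper gives no argument at all: it simply cites \cite[Theorem~1.24]{Cond}. Your direct approach is therefore more ambitious, and you have correctly located the crux --- the hat generators $\widehat k$ must be accounted for, not just the tensors. However, your proposed resolution, namely showing $K\subseteq[G,H^\varphi]$, cannot succeed. Take $G=H=C_2=\langle x\rangle$ and $q=2$: by relation~(\ref{reta4}) one has $1=\widehat{x^2}=\widehat x^{\,2}\,[x,x^\varphi]^{-1}$, so $\widehat x^{\,2}=[x,x^\varphi]$, and by the paper's own Example~(3) (from \cite[Theorem~3.1]{BR}) $C_2\otimes^2 C_2\cong C_4$. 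Thus $\widehat x$ has order $4$ while every element of $[G,G^\varphi]$ has order dividing $2$, so $\widehat x\notin[G,H^\varphi]$ and $K\not\subseteq[G,H^\varphi]$. No amount of bookkeeping with (\ref{reta4})--(\ref{reta6}) will change this. The same example shows that the isomorphism~(\ref{eq:abelian}) as stated is itself in tension with the rest of the paper, since the right-hand side is $C_2/C_2^2\otimes_\Z C_2/C_2^2\cong C_2$; the result in \cite{Cond} is presumably formulated for $[G,H^\varphi]$ alone (your argument does establish $[G,H^\varphi]\cong G/G'G^q\otimes_\Z H/H'H^q$) or under an extra hypothesis such as $G\cap H=1$, and the corollary should be read accordingly.
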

\begin{proof}
 It follows directly from parts (ii) and (iv) of Lemma~\ref{lem:basic} that 
 $[G, H^\varphi]$ is central in $\eta^q(G, H).$ Since under our assumptions 
 $\cal{K} = G \cap H$ is abelian, by relations \eqref{reta1}, \eqref{reta2}, \eqref{reta5} and \eqref{reta6}, we have 
$[\widehat{k}, \widehat{k_1}] = [k^q, (k^{q}_1)^{\varphi}] =  [k^q, (k_1)^{\varphi}]^q = (\widehat{[k, k_1]})^q = 1$ and 
$(\widehat{k})^{g} = \widehat{k} = (\widehat{k})^{h^{\varphi}}$, for all 
 $g \in G, h \in H, k, k_1 \in \cal{K}.$ Hence, the subgroup $K$ is central in 
 $\eta^{q}(G, H)$, too. Consequently, $\Upsilon^q(G, H)$ is central in $\eta^q(G,H).$ The isomorphism \eqref{eq:abelian} is proved in \cite[Theorem 1.24]{Cond}.  \end{proof}

We recall the epimorphism $\rho: \eta^q(G,H) \to GH$, where $g \mapsto g$, $h^\varphi \mapsto h$ and $\widehat k \mapsto k^q$. We write $\theta^q(G, H)$ to denote the kernel, $\ker \rho$. The following result is essentially an adaptation of \cite[Proposition 2.3]{BL} to our context. 

\begin{prop}  \label{prop:epicentral}  
Let $G$ and $H$ be normal subgroups of $L$ and $q$ a non-negative integer. Then,
\begin{itemize}
\item[(a)] The epimorphism $\rho$ induces a homomorphism $\rho' : \Upsilon^q(G, H) \to  G \cap H$ such that $([g, h^{\varphi}]) \rho'  =  [g, h]$ and $(\widehat{k}) \rho'  = k^q$, for all $g\in G$, $h\in H$ and $k \in G \cap H$;
\item[(b)] $[t, h] = [(t)\rho', h^\varphi], \; [g, t] = [g, ((t)\rho')^\varphi]$ and $[t, \widehat{k}] = [((t)\rho')^\varphi, k^q]$, for all $t \in [G, H^\varphi], g \in $G$, h \in H$ and $\widehat{k} \in \cK$; 
\item[(c)] $\mu^q (G,H) := \ker \rho'$ is a central subgroup of $\eta^q(G,H).$ 
\end{itemize}
\end{prop}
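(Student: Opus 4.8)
The plan is to dispatch the three items in turn: (a) is immediate from the definition of $\rho$, (b) is obtained by reducing everything to the generators of $\Upsilon^q(G,H)$ and invoking Lemma~\ref{lem:basic}, and (c) then follows formally from (b) together with the normality of the subgroups involved.

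For (a) I would simply restrict $\rho$ to $\Upsilon^q(G,H)$. Since $\rho$ is already a homomorphism on all of $\eta^q(G,H)$, no well-definedness issue arises, and it only remains to check that the images of the generators of $\Upsilon^q(G,H)=[G,H^\varphi]K$ lie in $G\cap H$. As $G$ and $H$ are normal in $L$, one has $[g,h]\in G\cap H$ for all $g\in G$, $h\in H$, so $([g,h^\varphi])\rho=[g,h]\in G\cap H$; and $(\widehat k)\rho=k^q\in G\cap H$ because $k\in\mathcal K$. Corestricting to $G\cap H$ gives the homomorphism $\rho'$ with the stated values on generators.

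For (b) I would first verify the three identities on the generating commutators $t=[g,h^\varphi]$ of $T=[G,H^\varphi]$ (and, for the first two, on the generators $\widehat k$ of $K$): $[t,h^\varphi]=[(t)\rho',h^\varphi]$ and $[g,t]=[g,((t)\rho')^\varphi]$ follow from Lemma~\ref{lem:basic}(ii), supplemented by (v) for the $\widehat k$, while $[t,\widehat k]=[((t)\rho')^\varphi,k^q]$ follows from (iv) and (ii). The passage from generators to an arbitrary $t$ is then carried out by the commutator calculus, using $[t_1t_2,x]=[t_1,x]^{t_2}[t_2,x]$ and its companion. The correction terms appearing are conjugates $s^{t_2}$ of elements $s\in T$ by $t_2\in\Upsilon^q(G,H)$, and these are absorbed by the extension of Lemma~\ref{lem:basic}(i), namely that $s^{t}=s^{(t)\rho'}$ for all $s\in T$, $t\in\Upsilon^q(G,H)$; I would prove this auxiliary fact by reduction to generators (it is exactly Lemma~\ref{lem:basic}(i) for $t\in T$, relation \eqref{reta3} for $t=\widehat k$, and then an induction on word length using that $\rho'$ is a homomorphism). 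In this way the first two identities extend to all of $\Upsilon^q(G,H)$, while the third extends over $T$.

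For (c) observe that $\mu^q(G,H)=\ker\rho'=\Upsilon^q(G,H)\cap\ker\rho$ is normal in $\eta^q(G,H)$, and that $\eta^q(G,H)$ is generated by $G$, $H^\varphi$ and $K$; hence it suffices to show that each $t\in\ker\rho'$ commutes with these three. Commutation with $G$ and $H^\varphi$ is immediate from the extended first two identities of (b): they give $[t,h^\varphi]=[(t)\rho',h^\varphi]=1$ and $[g,t]=[g,((t)\rho')^\varphi]=1$. The remaining point, commutation with $K$, is the main obstacle, since the third identity of (b) holds only on $T$ and genuinely fails on $K$ (already $[\widehat k,\widehat{k_1}]=[k^q,(k_1^q)^\varphi]$ is not symmetric under $\varphi$ in general). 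The way around this is to write $t=t_0\widehat w$ with $t_0\in T$, $\widehat w\in K$, and to exploit the defining condition $(t)\rho'=1$, which forces $\rho'(\widehat w)=((t_0)\rho')^{-1}$, so that $c:=(t_0)\rho'$ is a (product of) $q$-th power(s). Expanding $[t,\widehat{k_2}]$ by means of \eqref{reta3}, \eqref{reta5} and the third identity of (b) applied to $t_0$, the terms telescope to $1$ precisely because, for $q$-th powers $a=k_2^q$ and $c$, relation \eqref{reta5} identifies $[a,c^\varphi]$ and $[a^\varphi,c]$ with the mutually inverse hat commutators $[\widehat{k_2},\widehat{w^{-1}}]$ and $[\widehat{w^{-1}},\widehat{k_2}]$, yielding the symmetry $[a,c^\varphi]=[a^\varphi,c]$ that is otherwise unavailable. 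Thus $t$ also commutes with $K$ and is central. I expect this last step to be the delicate one: controlling the interaction between the tensor part $T$ and the hat part $K$, where the asymmetry between $[\,\cdot\,,\cdot^\varphi]$ and $[\cdot^\varphi,\cdot]$ is repaired only under the hypothesis $(t)\rho'=1$ through \eqref{reta5}, and where the general case requires an induction on the number of hat generators in $\widehat w$ that preserves the $\rho'$-balance at each stage.
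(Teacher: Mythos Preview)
Your treatment of (a) and (b) matches the paper's, and your extension of the first two identities in (b) from $T$ to all of $\Upsilon^q(G,H)$ is exactly what the paper does in the course of proving (c). The divergence is only in how you handle commutation with $K$.

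You correctly observe that the form $[t,\widehat{k}]=[((t)\rho')^{\varphi},k^q]$ recorded in (b) does not extend verbatim to $\Upsilon^q(G,H)$, and you then propose a detour through a symmetry argument based on \eqref{reta5} under the hypothesis $(t)\rho'=1$. This can be made to work, but it is an unnecessary complication. The point you are missing is that for $t\in T$ one has \emph{two} equivalent forms,
\[
[t,\widehat{k}]\;=\;[((t)\rho')^{\varphi},k^q]\;=\;[(t)\rho',(k^q)^{\varphi}],
\]
the equality of the two following from Lemma~\ref{lem:basic}(ii) and (iv). It is the \emph{second} form that extends cleanly to all of $\Upsilon^q(G,H)$: for a single generator $\widehat{k_1}\in K$, relation \eqref{reta5} gives $[\widehat{k_1},\widehat{k}]=[k_1^q,(k^q)^{\varphi}]=[(\widehat{k_1})\rho',(k^q)^{\varphi}]$ on the nose, and the same commutator induction you used for the first two identities then yields
\[
[w,\widehat{k}]\;=\;[(w)\rho',(k^q)^{\varphi}]\qquad\text{for every }w\in\Upsilon^q(G,H).
\]
This is precisely the paper's argument for (c). So the ``main obstacle'' you identify dissolves once one selects the form of the identity with $\varphi$ on the second entry rather than the first; no separate symmetry repair via \eqref{reta5}, and no induction on the number of hat letters under the constraint $(t)\rho'=1$, is needed.
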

\begin{proof}
Item (a) is an immediate consequence of the definitions of $\rho$ and $\Upsilon^q(G, H)$. Item (b) follows by using an induction argument based on Lemma~\ref{lem:basic} (ii), (iv) and commutator calculus. To prove Item (c), we first notice that every element $w \in \Upsilon^q(G,H) = TK$ can be written as a product $w = t \widehat{k}$ where $t \in T = [G, H^\varphi]$ and $k \in \cK$; this follows by an induction argument using defining relations \eqref{reta3} and \eqref{reta4}. Now we have
\begin{equation} \label{eq:central1}
\begin{split}
 [g, w] & = [g, t\wk] \\
        & = [g, \wk] [g, t]^{\wk} \\
	    & = [g, (k^q)^\varphi] [g, ((t)\rho')^\varphi]^{\wk} \quad \text{by Lemma~\ref{lem:basic} (iv), and Item (b)}  \\
        & = [g, (k^q)^\varphi] [g, ((t)\rho')^\varphi]^{k^q} \quad \text{by Relations \eqref{reta3}} \\
        & = [g, (k^q)^\varphi] [g, ((t)\rho')^\varphi]^{(k^q)^\varphi} \quad \text{by Relations \eqref{rel:s2}} \\
	    & = [g, ((t \widehat{k}) \rho')^\varphi]. \\
\end{split}
\end{equation}
With similar arguments we get that $[w, h^\varphi] = [(w)\rho', h^\varphi]$ and $[w, \wk] = [(w)\rho', (k^q)^\varphi]$. Consequently, if $w \in \mu^q(G,H) = \ker \rho'$, then $[g, w] = [w, h^\varphi] = [w, \wk] = 1$, for all $g \in G, \, h^\varphi \in H^\varphi$, and $k \in \cK$. Therefore, $\ker \rho'$ is central in $\eta^q(G,H).$ 
\end{proof} 

\begin{rem} \label{rem:1}
Notice that $\eta^q(G, H) / \Upsilon^q(G,H) \cong G \times H$, while $\eta^q(G, H) / \theta^q (G,H) \cong G H.$ This implies that there is an isomorphism from $\eta^q(G, H) / \mu^q(G,H)$ to a subgroup of $G \times H \times GH$. In addition, 
 $\eta^q(G, H) / [G, H^\varphi] \cong (K/[G, H]) \times G \times H,$ since $[G, H^\varphi] K / [G, H^\varphi]$ is generated by the elements $\wk,$ for $k \in \cK,$ with the relations $\widehat{k k_1} = \wk \widehat{k_1}$ and $[\wk, \widehat{k_1}] = 1,$ according to defining relations \eqref{reta4} and \eqref{reta5}. Set 
$\mu_0^q(G,H) := [G, H^\varphi] \cap \mu^q(G, H)$. We thus obtain the following exact sequence 
\begin{equation} \label{eq:central2}
1 \to \mu_0^q(G,H) \to \eta^q(G, H) \to (K/[G, H]) \times G \times H \times GH. 
\end{equation}   
\end{rem}

Now, let $p$ and $q$ be non-negative integers with $p \geq 1$. Let $\delta: \eta^{pq}(G,H) \to \eta^p(G,H)$ be defined on the generators of $\eta^{pq}(G,H)$ by $(g)\delta:=g$, $(h^\varphi)\delta:=h^\varphi$ and $(\widehat{k})\delta: =\widehat{k^q}$, for all $g \in G$, $h^\varphi \in H^\varphi$ and $k \in {\cal K}$. It is a routine to check that in this way $\delta$ preserves the defining relations of $\eta^p(G,H)$; relations \eqref{reta1}, \eqref{reta2}, \eqref{reta3}, \eqref{reta5} and \eqref{reta6} are easily carried out. However, relation \eqref{reta4} demand tedious calculations and the impatient reader can consult \cite[Theorem 1.22]{Cond}. Thus we obtain a homomorphism from $\eta^{pq}(G,H)$ to $\eta^p(G,H)$. Set $\delta'=\delta\vert_{\Upsilon^{pq}(G,H)}: \Upsilon^{pq}(G,H) \to \Upsilon^p(G,H)$. The next Proposition generalises \cite[Proposition 2.6]{BR} and \cite[Theorem 1.22]{Cond}.

\begin{prop} \label{prop:seq} 
Let $p \geq 1$. There are exact sequences 
	\begin{eqnarray} \label{021}
	\eta^{pq}(G,H) \stackrel{\delta}{\rightarrow} \eta^p(G,H) \to \frac{\cal{K}}{[G, H] \cal{K}^q} \to 1;
	\end{eqnarray}
	\begin{eqnarray} \label{022}
	\Upsilon^{pq}(G,H) \stackrel{\delta'}{\rightarrow} \Upsilon^p(G,H) \to \frac{\cal{K}}{[G, H] \cal{K}^q} \to 1;
	\end{eqnarray}
	In particular, if $q =0$ then we have: 
	\begin{eqnarray} \label{seq:023}
	\eta(G,H) \stackrel{\delta}{\rightarrow} \eta^p(G,H) \to \frac{\cal{K}}{[G, H]} \to 1;
	\end{eqnarray}
	\begin{eqnarray} \label{seq:024}
	\Upsilon(G, H) \stackrel{\delta'}{\rightarrow} \Upsilon^p(G,H) \to \frac{\cal{K}}{[G, H]} \to 1.
	\end{eqnarray}
\end{prop}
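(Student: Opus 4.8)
The plan is to exhibit the third arrow of each sequence as the natural cokernel map and then identify its kernel with the image of $\delta$ (respectively $\delta'$). I treat \eqref{021} first. Since $\cK' = [\cK,\cK] \leq [G,H]$, the quotient $\cK/([G,H]\cK^q)$ is abelian, and I define $\pi\colon \eta^p(G,H) \to \cK/([G,H]\cK^q)$ on generators by $g \mapsto 1$, $h^\varphi \mapsto 1$ and $\widehat{k} \mapsto k[G,H]\cK^q$. The first task is to check that $\pi$ respects the defining relations of $\eta^p(G,H)$. The relations \eqref{rel:s1}, \eqref{rel:s2} involve only elements of $G$ and $H^\varphi$, and \eqref{reta3}, \eqref{reta5} involve, in addition, hats whose contribution cancels, so all of these map to $1$ (for \eqref{reta5} using that the target is abelian); \eqref{reta1} and \eqref{reta2} map to $\overline{k}\,\overline{k^{g}}^{\,-1}$ and $\overline{k}\,\overline{k^{h}}^{\,-1}$, which vanish because $[k,g],[k,h] \in [G,H]$; \eqref{reta6} maps to $\overline{[g,h]} = 1$; and \eqref{reta4}, after discarding its product of commutators (which lies in $[G,H]$ hence in the kernel), reduces to the homomorphism identity $\overline{kk_1} = \overline{k}\,\overline{k_1}$. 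Thus $\pi$ is a well-defined homomorphism, and it is clearly onto, which supplies the terminal arrow $\to 1$.

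Next I record that $\operatorname{Im}\delta \subseteq \ker\pi$: on the generators of $\eta^{pq}(G,H)$ the composite $\pi\delta$ sends $g, h^\varphi \mapsto 1$ and $\widehat{k} \mapsto \widehat{k^q} \mapsto k^q[G,H]\cK^q = 1$, so $\pi\delta$ is trivial.

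The substance of the argument is the reverse inclusion $\ker\pi \subseteq \operatorname{Im}\delta$, which I establish by computing the cokernel of $\delta$ directly. Put $M := \operatorname{Im}\delta = \langle G,\, H^\varphi,\, \widehat{k^q}\ (k \in \cK)\rangle$. I first show $M \trianglelefteq \eta^p(G,H)$: conjugating $g \in G$ or $h^\varphi \in H^\varphi$ by a hat introduces an extra factor lying in $T = [G,H^\varphi]$ by Lemma~\ref{lem:basic}(v), and $T \subseteq M$; conjugating $\widehat{k^q}$ by $g$ or $h^\varphi$ yields $\widehat{(k^{g})^{q}}$, respectively $\widehat{(k^{h})^{q}}$, both in $M$ by \eqref{reta1}--\eqref{reta2} (recall $\cK \trianglelefteq L$); and $[\widehat{k^q},\widehat{k_1}] = [k^{pq},(k_1^{p})^\varphi] \in T$ by \eqref{reta5}. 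I then compute $\eta^p(G,H)/M$: since $G$, $H^\varphi$ and $T$ all lie in $M$, this quotient is generated by the images $\widehat{k}M$; by Remark~\ref{rem:1} these satisfy $\widehat{kk_1}M = \widehat{k}\,\widehat{k_1}M$ and commute, by \eqref{reta6} we have $\widehat{[g,h]} \in T \subseteq M$, and by construction $\widehat{k^q} \in M$. Hence $\eta^p(G,H)/M$ is exactly the abelian group $\cK/([G,H]\cK^q)$, and the quotient map coincides with $\pi$; therefore $\ker\pi = M = \operatorname{Im}\delta$, which proves the exactness of \eqref{021}.

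Finally, \eqref{022} follows by applying the same cokernel computation to the normal subgroup $\Upsilon^p(G,H) = TK$. Here $\operatorname{Im}\delta' = T\langle \widehat{k^q}\rangle$, and since $K = \langle \widehat{k}\rangle \subseteq \Upsilon^p(G,H)$ while $T \subseteq \operatorname{Im}\delta'$, one finds $\Upsilon^p(G,H)/\operatorname{Im}\delta' \cong \cK/([G,H]\cK^q)$ with the induced map equal to $\pi|_{\Upsilon^p(G,H)}$; hence that restriction is onto and $\ker(\pi|_{\Upsilon^p(G,H)}) = \operatorname{Im}\delta'$, giving exactness of \eqref{022}. The special cases \eqref{seq:023}--\eqref{seq:024} are just the specialization $q = 0$, for which $\cK^{0} = 1$ and $\eta^{p\cdot 0}(G,H) = \eta(G,H)$. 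I expect the main obstacle to be the third paragraph: verifying the normality of $M$ and, above all, confirming that the cumbersome product of commutators appearing in \eqref{reta4} disappears modulo $T$, so that the quotient really matches the structure of $KT/T$ recorded in Remark~\ref{rem:1}.
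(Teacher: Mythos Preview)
Your proof is correct and follows essentially the same route as the paper: both identify $\operatorname{Im}\delta = \langle G, H^{\varphi}, \widehat{k^q}\rangle$, check its normality via Lemma~\ref{lem:basic}, and compute the quotient $\eta^{p}(G,H)/\operatorname{Im}\delta$ using the observation from Remark~\ref{rem:1} that modulo $T$ the hats satisfy $\widehat{kk_1}\equiv \widehat{k}\,\widehat{k_1}$ and commute. You are simply more explicit than the paper in constructing the surjection $\pi$ and verifying it on the defining relations, which in fact supplies the inverse map needed to be sure the quotient is \emph{exactly} $\cK/([G,H]\cK^{q})$ rather than a proper quotient of it.
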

	\begin{proof} According to the definition of $\delta$  we have 
	$$\Imm(\delta)=\left\langle g, h^\varphi, \widehat{k^q} \, \vert \, g \in G, \text{ } h \in H \text{ and } k \in {\cal K} \right\rangle.$$ 
	Thus, it follows from Lemma \ref{lem:basic} that $\Imm(\delta)$ is a normal subgroup of $\eta^p(G,H)$. Now, as already observed in Remark~\ref{rem:1}, $\eta^p(G,H)/\Imm(\delta)$ is generated by the cosets of the elements $\widehat k \in \widehat {\cal K}$ with the relations 
	$\widehat{k k_1}=\widehat k \widehat{k_1}$. Furthermore, as $\widehat{k^q} \in \Imm(\delta)$ for all $k \in {\cal K}$, it follows that $(\widehat k)^q \equiv 1 \pmod { \Imm(\delta)}$. This proves \eqref{021}. The sequence \eqref{022} is essentially \cite[Theorem 6, (ii)]{Ellis} and follows by a similar argument as above, since $\Imm(\delta')$ is also normal in $\Upsilon^q(G, H)$. The sequences \eqref{seq:023} and \eqref{seq:024} follow at once, respectively from \eqref{021} and \eqref{022} with $q = 0$. This completes the proof. \end{proof}
	
The next result shows that the derived group $\eta^q(G, H)'$ has the same formal structure for all $q \geq 0.$ In order to avoid any confusion, we write $\cT(G,H)$ for the subgroup $[G, H^\varphi] \leqslant \eta(G,H)$ (case $q=0$), which is isomorphic with the non-abelian tensor product $G \otimes H$ for all compatible actions of one group upon another. In many places in this paper we write $T$ for the subgroup $[G, H^\varphi] \leqslant \eta^q(G,H),$ so that the q-tensor product $G \otimes^q H \cong T K \leqslant \eta^q(G,H).$ If $q=0$ then $\cT(G,H) = T.$  

\begin{prop} \label{prop:structure2}
Let $G$ and $H$ be normal subgroups of a group $L.$ Then, for all $q \geq 0,$
$\eta^q(G, H)' = [G, H^\varphi] \cdot G' \cdot (H')^\varphi.$ In particular, if $q=0$ then the non-abelian tensor product 
$G \otimes H$ is embedded in $\eta(G, H)'.$
\end{prop}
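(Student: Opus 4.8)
The plan is to prove the claimed equality by establishing two inclusions, exploiting the semidirect product decomposition \eqref{eq:structure}, namely $\eta^q(G,H) = (\Upsilon^q(G,H) \cdot G) \cdot H^\varphi$ with $\Upsilon^q(G,H) = [G,H^\varphi]K$ normal. First I would set $N := [G, H^\varphi] \cdot G' \cdot (H')^\varphi$ and argue that $N$ is a normal subgroup of $\eta^q(G,H)$: the factor $[G,H^\varphi] = T$ is already known to be normal (as noted after \eqref{reta6}), and $G'$, $(H')^\varphi$ are normal because $G$ and $H^\varphi$ are each normal in $\eta^q(G,H)$ by the structure \eqref{eq:structure}, so their derived subgroups are normal as well; one checks that conjugation by generators keeps the product inside $N$ using Lemma~\ref{lem:basic}. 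The inclusion $N \subseteq \eta^q(G,H)'$ is the easy direction: $G' = [G,G] \subseteq \eta^q(G,H)'$ and $(H')^\varphi = [H^\varphi, H^\varphi] \subseteq \eta^q(G,H)'$ trivially, and $[G, H^\varphi] \subseteq \eta^q(G,H)'$ by definition of the commutator subgroup, so their product lies in the derived subgroup.

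For the reverse inclusion $\eta^q(G,H)' \subseteq N$, the strategy is to show that the quotient $\eta^q(G,H)/N$ is abelian. Using the decomposition \eqref{eq:structure}, every element of $\eta^q(G,H)$ can be written as $w \cdot g \cdot h^\varphi$ with $w \in \Upsilon^q(G,H)$, $g \in G$, $h \in H$. Modulo $N$ the factor $w \in \Upsilon^q(G,H) = TK$ collapses so that only the image of $K$ survives (since $T = [G,H^\varphi] \subseteq N$), and that image is abelian by Remark~\ref{rem:1} (the $\widehat k$ satisfy $\widehat{kk_1} = \widehat k\,\widehat{k_1}$ and $[\widehat k, \widehat{k_1}] = 1$ modulo $T$). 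Likewise $G/G'$ and $H^\varphi/(H')^\varphi$ are abelian. It then remains to verify that the three types of generators commute pairwise modulo $N$: the commutator $[g, h^\varphi]$ lies in $T \subseteq N$ directly; the commutator $[g, \widehat k] = [g, (k^q)^\varphi]$ by Lemma~\ref{lem:basic}(v) again lands in $[G, H^\varphi] \subseteq N$; and $[h^\varphi, \widehat k] = [h^\varphi, (k^q)^\varphi] \in (H')^\varphi \subseteq N$. Collecting these, all commutators of generators vanish in $\eta^q(G,H)/N$, hence this quotient is abelian and $\eta^q(G,H)' \subseteq N$.

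The main obstacle I anticipate is the careful bookkeeping in showing $N$ is normal and that the generators genuinely commute modulo $N$, since one must handle the $\widehat k$ generators (absent when $q = 0$) and invoke the correct parts of Lemma~\ref{lem:basic}, particularly (v) which converts $[g,\widehat k]$ and $[h^\varphi, \widehat k]$ into ordinary commutators involving $(k^q)$ and $(k^q)^\varphi$. One subtlety is that $[h^\varphi, \widehat k] = [h^\varphi, (k^q)^\varphi]$ belongs to $(H^\varphi)' = (H')^\varphi$ only because $k^q \in \mathcal K \subseteq H$, so this relies on $\mathcal K = G \cap H$; this is precisely where the normal-embedding hypothesis is used and must be stated explicitly. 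Once these commutator reductions are in place the argument is uniform in $q$, which is exactly the point of the proposition: the derived subgroup has the same formal description for every $q \geq 0$. The final assertion for $q = 0$ is then immediate, since in that case $\mathcal T(G,H) = T = [G, H^\varphi] \subseteq \eta(G,H)'$ and $\mathcal T(G,H) \cong G \otimes H$ by the isomorphism recorded before relation \eqref{def:eta}.
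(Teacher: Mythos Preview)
Your argument is essentially the paper's own proof, just spelled out as two inclusions rather than a single computation of $[TKGH^\varphi, TKGH^\varphi]$: both rely on the decomposition \eqref{eq:structure}, on $K' \leq T$ (from relation \eqref{reta5}), and on Lemma~\ref{lem:basic}(v) to push the mixed commutators involving $\widehat k$ into $T$. One small slip: Lemma~\ref{lem:basic}(v) gives $[\widehat k, h^\varphi] = [k^q, h^\varphi]$, so $[h^\varphi, \widehat k] = [h^\varphi, k^q] \in T$, not $[h^\varphi, (k^q)^\varphi] \in (H')^\varphi$ as you wrote; the conclusion that it lies in $N$ is unaffected, but the justification should be corrected.
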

\begin{proof}
With the above discussion, we can write $\eta^q(G,H) = TK G H^\varphi$, according to 
\eqref{eq:structure}. Now, $T$ and $TK$ are normal subgroups of $\eta^q(G,H),$ while 
$[G, H^\varphi] = T.$ By defining relations \eqref{reta5} we find that 
$K' \leqslant T.$ Additionally, from Lemma~\ref{lem:basic} (v) we obtain that $[K, G]$ and $[K, H^\varphi]$   
are both contained in $T.$ Therefore, $\eta^q(G, H)' = [T K G H^\varphi, T K G H^\varphi] = T \, G'\, (H')^\varphi = 
 [G, H^\varphi] G' (H')^\varphi.$ 
\end{proof}

\section{Some Closure Properties for  $\eta^q(G, H)$}

A number of authors have studied some closure properties such as finiteness, solubility, polycyclicity and nilpotency, among others, of the non-abelian tensor product of groups and of related constructions (cf. \cite{DLT,LT,LO,Moravec1,Nak,V}). In the context of $\nu(G), \; \eta(G,H)$ and $\nu^q(G)$, such closure properties were studied for instance in 
\cite{BFM, BM, Rocco, Rocco1, BR, DR, BNRa}. 
In this section we extend these considerations to the scope of $\eta^q(G, H), \; q \geq 0$. We will consider the following question: Let $\mathfrak{X}$ be a class of groups. If $G, H$ are normal subgroups of a certain group $L$ such that $G$ and $H$ belong to $\mathfrak{X}$, then does $\eta^q(G, H)$ belong to $\mathfrak{X}$?

Recall that a class $\mathfrak{X}$ of groups is called a \textit{Schur class} if for any group $G$ such that the factor group $G/Z(G)$ belongs to $\mathfrak{X}$, also the derived subgroup $G'$ is a $\mathfrak{X}$-group. Thus, the famous Schur's theorem just states that finite groups form a Schur class. Other interesting classes of groups (e.g., finite $\pi$-groups, locally (finite $\pi$-groups), polycyclic groups, polycyclic-by-finite groups) are Schur classes. The classical reference to this matter is \cite{Rob.finiteness}.  

In \cite{BL} Brown and Loday  proved that if $G$ is a finite $\pi$-group, then the non-abelian tensor square $G \otimes G$ is a finite $\pi$-group; in particular, $\nu(G)$ is a finite $\pi$-group. Ellis \cite{Ellis} proved the finiteness of $G \otimes H$ when $G$ and $H$ are finite groups. Moravec \cite{Moravec} showed that if $G$ is a locally (finite $\pi$-groups), then the so is $\nu(G)$. In \cite{LO}, Lima and Oliveira proved that if $G$ is polycyclic-by-finite, then so is $\nu(G)$. Here we extend these results to the scope of  $\eta^q(G,H), \; q \geq 0,$ and give elementary proofs of them by using only the structural properties discussed in Section 2 and the definition of a Schur class, based on Proposition~\ref{prop:epicentral} and Remark~\ref{rem:1}. 

We write $\mathfrak{F}_{\pi}$ to indicate the class of finite $\pi$-groups and $L\mathfrak{F}_{\pi}$ for the class of locally (finite $\pi$-groups), where $\pi$ is a set of primes. To ease reference we state Lemma~\ref{lem:Schur2}, which extends Schur's theorem to the class $L\mathfrak{F}_{\pi}$.

\begin{lem} \label{lem:Schur2} 
Let $G$ be any group. If $G/Z(G) \in L\mathfrak{F}_{\pi}$ , then $G' \in L\mathfrak{F}_{\pi}.$ 
\end{lem}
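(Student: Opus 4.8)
The plan is to reduce the statement to the classical finite case, namely to the fact that the class $\mathfrak{F}_{\pi}$ of finite $\pi$-groups is itself a Schur class, which we take as known (see \cite{Rob.finiteness}). Recall that $\mathfrak{F}_{\pi}$ being a Schur class means precisely: whenever $H/Z(H)$ is a finite $\pi$-group, then $H'$ is a finite $\pi$-group. Since $G' \in L\mathfrak{F}_{\pi}$ means that every finitely generated subgroup of $G'$ is a finite $\pi$-group, it suffices to fix an arbitrary finitely generated subgroup $F \leqslant G'$ and to prove that $F$ is a finite $\pi$-group.

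So first I would write $F = \langle c_1, \dots, c_m \rangle$, where each generator $c_j$ is a product of commutators of elements of $G$. Collecting all the (finitely many) elements of $G$ that occur in these commutator expressions into a finite set $X$, and setting $H := \langle X \rangle$, one obtains a finitely generated subgroup $H \leqslant G$ with $F \leqslant H'$. The purpose of this step is that $F$ is now trapped inside the derived subgroup of a finitely generated subgroup of $G$, so that the finite Schur property can be brought to bear on $H$.

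Next I would control $H/Z(H)$. By the second isomorphism theorem $H/(H \cap Z(G)) \cong HZ(G)/Z(G)$, and the latter is a finitely generated subgroup of $G/Z(G)$ (generated by the images of $X$). As $G/Z(G) \in L\mathfrak{F}_{\pi}$ by hypothesis, this subgroup is a finite $\pi$-group. Since every element of $H \cap Z(G)$ centralises $H$, we have $H \cap Z(G) \leqslant Z(H)$, so $H/Z(H)$ is a homomorphic image of $H/(H \cap Z(G))$ and is therefore a finite $\pi$-group as well.

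Finally, applying the finite Schur property to $H$ yields that $H'$ is a finite $\pi$-group, and hence so is its subgroup $F$. As $F$ was an arbitrary finitely generated subgroup of $G'$, this shows $G' \in L\mathfrak{F}_{\pi}$. I expect the only delicate point to be the bookkeeping in the reduction step: one must pass from $H/(H \cap Z(G))$, which visibly embeds in $G/Z(G)$, to $H/Z(H)$, which is what the finite Schur property requires, and verify the inclusion $H \cap Z(G) \leqslant Z(H)$. Everything else is a routine consequence of the local nature of the class together with the classical finite case.
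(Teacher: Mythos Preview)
Your argument is correct: the reduction to a finitely generated subgroup $H \leqslant G$ with $F \leqslant H'$, the observation that $H \cap Z(G) \leqslant Z(H)$ so that $H/Z(H)$ is a quotient of the finite $\pi$-group $HZ(G)/Z(G)$, and the application of the classical Schur property for $\mathfrak{F}_{\pi}$ are all sound and constitute the standard proof. The paper, however, does not prove this lemma at all; it merely states it ``to ease reference'' as a known extension of Schur's theorem, implicitly pointing to \cite{Rob.finiteness}. So there is nothing to compare against: you have supplied a complete proof where the paper chose to omit one.
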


In the next theorem we establish some closure properties on $\eta^q(G,H).$  

\begin{thm} \label{thm:finiteness} 
 Let $G$ and $H$ be normal subgroups of a group $L$ and let $q$ be a non-negative integer. Then, 
	\begin{itemize}
	\item[(i)] If $G, H \in \mathfrak{F}_{\pi}$, then $\eta^q(G, H) \in \mathfrak{F}_{\pi};$ 
	\item[(ii)] If $G, H \in L\mathfrak{F}_{\pi}$, then $\eta^q(G, H) \in L\mathfrak{F}_{\pi};$ if furthermore $G$ and  $H$ have finite exponents, then the exponent of $\eta^q(G,H)$ can be bound in terms of $q$, $\exp G$ and $\exp H;$
	\item[(iii)] If $G$ and $H$ are soluble groups of derived lengths $l_1$ and $l_2$, respectively, then $\eta^q(G,H)$ is also soluble, of at most derived length $l_1 + l_2 + 1$;
	\item[(iv)] If $G$ and $H$ are nilpotent groups of nilpotency classes $c_1$ and $c_2$, respectively, then $\eta^q(G,H)$ is nilpotent of at most class $c_1 + c_2 + 1$; 
	\item[(v)] If $G$ and $H$ is polycyclic-by-finite, then so is $\eta^q(G,H)$. 
	\end{itemize}
\end{thm}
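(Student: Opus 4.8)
The plan is to exploit the semidirect‑product decomposition \eqref{eq:structure}, namely $\eta^q(G,H) = (\Upsilon^q(G,H)\cdot G)\cdot H^\varphi$, together with the structural facts from Section 2. The common strategy for all five parts is to reduce each closure property of the whole group $\eta^q(G,H)$ to the corresponding property of the three ``building blocks'' $G$, $H$, and the normal subgroup $\Upsilon^q(G,H) \cong G\otimes^q H$, using that an extension of an $\mathfrak{X}$‑group by an $\mathfrak{X}$‑group is again an $\mathfrak{X}$‑group for each of the classes in question. Since $\eta^q(G,H)/\Upsilon^q(G,H)\cong G\times H$ (Remark~\ref{rem:1}), the crux in every case is to control $\Upsilon^q(G,H)$.

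For parts (i) and (ii) (the Schur‑class arguments), I would first use Proposition~\ref{prop:epicentral}(c): the central subgroup $\mu^q(G,H)=\ker\rho'$ sits inside $\eta^q(G,H)$, and by Remark~\ref{rem:1} the quotient $\eta^q(G,H)/\mu^q(G,H)$ embeds in $G\times H\times GH$. If $G,H\in\mathfrak{F}_\pi$ (resp.\ $L\mathfrak{F}_\pi$), then $G\times H\times GH$ lies in the same class, hence so does $\eta^q(G,H)/\mu^q(G,H)$. The point is that a group whose central quotient lies in a Schur class has its derived subgroup in that class; applying Schur's theorem (resp.\ Lemma~\ref{lem:Schur2}) gives $\eta^q(G,H)'\in\mathfrak{F}_\pi$ (resp.\ $L\mathfrak{F}_\pi$). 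One then argues that $\eta^q(G,H)/\eta^q(G,H)'$ is an abelian image lying in the class as well (it maps onto a section of $G\times H$), so the whole group is an extension of one $\mathfrak{X}$‑group by another and lies in $\mathfrak{X}$. The exponent bound in (ii) follows by combining the bound on $\exp(\eta^q(G,H)/\mu^q(G,H))$ coming from $\exp G$, $\exp H$, and $q$, with the exponent of the central $\mu^q(G,H)$, which can itself be controlled through the relations in Lemma~\ref{lem:basic}.

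For parts (iii)–(v) I would argue more directly from the extension structure. In each case $\eta^q(G,H)/\Upsilon^q(G,H)\cong G\times H$ inherits the relevant property with a controlled invariant (derived length $\max(l_1,l_2)$, nilpotency class $\max(c_1,c_2)$, or polycyclic‑by‑finite), so it suffices to bound the same invariant on the normal subgroup $\Upsilon^q(G,H)$ and then add. The decisive observation is Proposition~\ref{prop:iso1} together with Proposition~\ref{prop:epicentral}: via $\rho'$ the group $\Upsilon^q(G,H)$ maps onto a subgroup of $G\cap H$ with \emph{central} kernel $\mu^q(G,H)$, so $\Upsilon^q(G,H)$ is a central extension of a subgroup of $G\cap H$. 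For solubility this forces the derived length of $\Upsilon^q(G,H)$ to be at most one more than that of its image, and an extension of a soluble group of length $\le l_1+l_2$ by a central (hence abelian) subgroup yields the bound $l_1+l_2+1$; for nilpotency, a central extension of a nilpotent group of class $c$ has class at most $c+1$, giving $c_1+c_2+1$; for (v), central‑by‑(polycyclic‑by‑finite) is again polycyclic‑by‑finite, and polycyclic‑by‑finite is closed under extensions.

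\textbf{The main obstacle} will be handling the ``hat'' subgroup $K$ inside $\Upsilon^q(G,H)=TK$ rather than the tensor part $T=[G,H^\varphi]$ alone. In the $q=0$ literature one only has $T\cong G\otimes H$, but for $q\ge 1$ the elements $\widehat k$ contribute, and one must verify that $K$ (and its interaction with $G$, $H^\varphi$) does not enlarge the relevant invariant. This is exactly where Lemma~\ref{lem:basic} is needed: parts (v)–(ix) show that commutators involving $\widehat k$ collapse into $T$ (e.g.\ $K'\le T$ and $[K,G],[K,H^\varphi]\le T$, as used in Proposition~\ref{prop:structure2}), and that the problematic central elements $[k,k_1^\varphi]$ have order dividing $q$. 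Making these collapses precise—so that the central kernel $\mu^q(G,H)$ genuinely absorbs the extra $q$‑tensor structure and the stated bounds come out with only the single extra ``$+1$'' and no dependence on $q$ in the soluble/nilpotent cases—will be the technical heart of the proof.
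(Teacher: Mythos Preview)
Your strategy for (i)--(ii) matches the paper's: use centrality of $\mu^q(G,H)$, embed the quotient into a product built from $G$ and $H$, and invoke the Schur-class property. (The paper actually works with $\mu_0^q(G,H)=\mu^q(G,H)\cap[G,H^\varphi]$, which lies inside $[G,H^\varphi]\le\eta^q(G,H)'$ by Proposition~\ref{prop:structure2}; this closes the argument immediately once $\eta^q(G,H)'$ is shown to be in the class, whereas your ``abelianization maps onto a section of $G\times H$'' step is loose --- the abelianization also carries a contribution from $K$.) Your approach to (iii) via the two-step extension through $\Upsilon^q(G,H)$ differs from the paper's one-step extension through $\mu^q(G,H)$, but it works, since derived lengths \emph{do} add over extensions and you recover $\max(l_1,l_2)+\bigl(\min(l_1,l_2)+1\bigr)=l_1+l_2+1$.

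The genuine gap is in (iv). You propose to bound the nilpotency class of the normal subgroup $\Upsilon^q(G,H)$ and of the quotient $G\times H$ separately ``and then add'', but nilpotency classes do \emph{not} add over arbitrary extensions: a nilpotent-by-nilpotent group need not be nilpotent at all (e.g.\ $S_3$). Knowing that $\Upsilon^q(G,H)$ has class $\le\min(c_1,c_2)+1$ and that $G\times H$ has class $\max(c_1,c_2)$ says nothing about the class of $\eta^q(G,H)$. The paper instead uses that $\mu^q(G,H)$ is central in the \emph{whole} group $\eta^q(G,H)$: the quotient $\eta^q(G,H)/\mu^q(G,H)$ embeds in $G\times H\times GH$, and Fitting's theorem applied to the normal subgroups $G,H\trianglelefteq L$ gives $GH$ nilpotent of class $\le c_1+c_2$; then a single $+1$ for the central kernel yields $c_1+c_2+1$. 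A parallel problem occurs in (v): your assertion ``central-by-(polycyclic-by-finite) is polycyclic-by-finite'' is false in general (any abelian group that is not finitely generated gives a counterexample). The paper again routes through the Schur-class property to obtain $\eta^q(G,H)'$ polycyclic-by-finite, whence $\mu_0^q(G,H)\le\eta^q(G,H)'$ is polycyclic, and then closes by extension. In both (iv) and (v) the fix is the same: abandon the decomposition through $\Upsilon^q$ and exploit directly that $\mu^q$ (or $\mu_0^q$) is central in all of $\eta^q(G,H)$, not merely in $\Upsilon^q(G,H)$.
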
	
	\begin{proof} 
 (i). If $G, H \in \mathfrak{F}_{\pi}$ then clearly $(\cK/[G,H]) \times G \times H \times GH \in \mathfrak{F}_{\pi}.$ Thus,  
$\eta^q(G, H)/ \mu_0^q(G, H) \in \mathfrak{F}_{\pi}$, by Remark~\ref{rem:1}, sequence \eqref{eq:central2}. By Schur's theorem (\cite[10.1.4]{Rob}), $\eta^q(G,H)'$ is finite and $\exp(\eta^q(G,H)')$ divides $|G|^2 |H|^2$, that is, 
$\eta^q(G,H)' \in {\fF}_{\pi}.$ Since, by Proposition~\ref{prop:structure2}, $\mu_0^q(G,H) \leqslant [G, H^\varphi]  \leqslant \eta^q(G,H)'$, we get that $\mu_0^q(G,H) \in {\fF}_{\pi}$ and so, $\eta^q(G, H) \in \mathfrak{F}_{\pi}$.

\noindent
(ii). Using a similar argument as in Part (i) and Lemma~\ref{lem:Schur2} we find that both $\eta^q(G,H) / \mu_0^q(G,H)$ and $\mu_0^q(G,H)$ are locally (finite $\pi$-groups). Therefore, $\eta^q (G, H) \in L\mathfrak{F}_{\pi},$ by Reidemeister-Schreier's theorem (\cite[6.1.8]{Rob}). If in addition $G$ and $H$ have finite exponents, then by \cite[Corollary 5]{Moravec} $\exp \eta(G,H)$ can be bound in terms of $\exp G$ and 
$\exp H.$ Thus, by sequence \eqref{seq:023} in Proposition~\ref{prop:seq} we see that the same is true for $\eta^q(G,H),$ with the additional restriction that, due to Lemma~\ref{lem:basic} (vi), such upper bound may also involve $q$ in $\Imm \delta.$

\noindent
Part (iii) follows directly from Proposition~\ref{prop:epicentral}, by the fact that $G \times H \times G H$ is soluble of at most derived length $l_1 + l_2$, while $\mu^q(G,H)$ is abelian. 

\noindent
(iv). Analogously, by Fitting's theorem \cite[5.2.8]{Rob} $G \times H \times GH$ is nilpotent of class at most $c_1 + c_2$, while $\mu^q(g,H)$ is central in $\eta^q(G,H)$.   

\noindent (v). Again, we have that $\eta^q(G, H)/\mu_0^q (G,H)$ is polycyclic-by-finite and $\mu_0^q(G,H) \leqslant Z(\eta^q(G,H))$. Therefore,  $\eta^q(G, H)'$ is polyclicic-by-finite. It suffices to show that $\mu_0^q(G, H)$ is polycyclic. Let 
$M \trianglelefteq \eta^q(G, H)'$ be a polycyclic normal subgroup of finite index. Then $M \cap \mu_0^q(G,H)$ is a polycyclic normal subgroup of finite index of the abelian group $\mu_0^q(G,H)$. Therefore $\mu_0^q(G,H)$ is polycyclic. 
The proof is complete.  
\end{proof}
\begin{rem} \label{rem:nilclass}
Notice that if $G = H = L$ is a soluble group of derived length $l$ (respectively, nilpotent of class $c$), then the bounds in parts (iii) and (iv) of Theorem~\ref{thm:finiteness} become $l + 1$ (respectively, $c+1$), according to \cite[Theorem 2.8]{BR}.
\end{rem}
\section{A polycyclic presentation for the $q$-tensor square of the dihedral group $D_n$, $q$ odd} 

In this Section we restrict our attention to the group $\nu^q(G)$ (that is, the group $\eta^q(G,G)$ when $G = H = L$), particularly on the computation of the $q$-tensor square of the dihedral groups $D_n$ when $q$ is odd. We begin with a brief description of an algorithm derived in \cite{DR} for  computing polycyclic presentations for $\nu^q(G)$, when $G$ is a polycyclic group given by a consistent polycyclic presentation. 

Let $G$ be a polycyclic group defined by a consistent polycyclic presentation $F/R$, where  $F$ is a free group generated by $g_1,...,g_n$.  The relations of a consistent polycyclic presentation $F/R$ are 
(cf. \cite [Section 9.4]{Sims}): 

\begin{itemize}
	\item[]\hspace{2cm}$ g_i^{e_i}= g_{i+1}^{\alpha_{i,i+1}}... g_n^{\alpha_{i,n}}$ for $i \in I$,
	\item[]\hspace{2cm}$g_j^{-1}g_ig_j=g_{j+1}^{\beta_{i,j,j+1}}...g_n^{\beta_{i,j,n}}$ for $j<i$,
	\item[]\hspace{2cm}$g_jg_ig_j^{-1}=g_{j+1}^{\gamma_{i,j,j+1}}...g_n^{\gamma_{i,j,n}}$ for $j<i$ e
	$j \notin I$,
\end{itemize}
for some  $I\subseteq \{1,...,n\}$, some  exponents  $e_i \in \N$ with $i\in I$, $\alpha_{i,j}$, $\beta_{i,j,k}$, $\gamma_{i,j,k} \in \Z$ and for all $i$, $j$ and  $k \in\{1,...,n\}$. Recall that this presentation is {\it refined} if all $i\in I$ are prime numbers.

Following  \cite{DR}, we write the relations of $G$ as relators of the form $r_1,...,r_l$, where  every relator  $r_j$ is a word in the  generators $g_1,...,g_n$,  $r_j=r_j(g_1,...,g_n)$. Let
\[
E_q(G):= \dfrac{F}{R^q[F,R]}, 
\]
which is a $q$-central extension  of  $G$.  

A presentation for the group $E_q(G)$ can be obtained according to the following construction (see \cite{DR}): for each relator $r_i$ we introduce a generator $t_i$; $E_q(G)$ is then the group generated by $g_1,...,g_n,t_1,...,t_l$ subject to the relators:
\begin{itemize}
	\item[(1)] $r_i(g_1,...,g_n)t_i^{-1}$ for $1\leq i\leq l$,
	\item[(2)] $[t_i,g_j]$ for $1\leq j \leq n$, $1\leq i \leq l$,
	\item[(3)] $[t_i,t_j]$ for $1\leq j < i \leq l$,
	\item[(4)] $t_i^q$ for $1\leq i\leq l$.
\end{itemize}

This is a polycyclic presentation of $E_q(G)$, possibly inconsistent (see \cite{DR}). The consistency relations can be evaluated in the consistent polycyclic presentation of $E_q(G)$ using a collection system from  left  to right, given by
\begin{itemize}
	\item[(1)] $r_i(g_1,...,g_n)t_1^{q_{i1}}...t_l^{q_{il}}$ for $1\leq i\leq l$,
	\item[(2)] $[t_i,g_j]$ for $1\leq j \leq n$, $1\leq i \leq l$,
	\item[(3)] $[t_i,t_j]$ for $1\leq j < i \leq l$,
	\item[(4)] $t_i^{d_i}$ for $1\leq i\leq l$ with $d_i\mid q$.
\end{itemize}

\begin{prop} \cite[Proposition 4.2]{DR} \label{prop} 
$G \wedge^q G=(E_q(G))'(E_q(G))^q$. Furthermore,  $(E_q(G))'(E_q(G))^q$ is the subgroup \[\left\langle [g_i, g_j]^{\epsilon}, g_k^q \; \vert \; 1\leq i < j \leq n, 1 \leq k \leq n \right\rangle, \] 	
where $\epsilon = 1$ if $G$ is finite and $\epsilon = \pm 1$ otherwise.
\end{prop}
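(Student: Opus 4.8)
The plan is to establish the identity $G \wedge^q G = (E_q(G))'(E_q(G))^q$ first, and then to identify this subgroup explicitly in terms of the polycyclic generators. For the first part, I would recall that $E_q(G) = F/R^q[F,R]$ is a $q$-central extension of $G$, meaning there is a natural projection $\pi : E_q(G) \to G = F/R$ with central kernel $R/R^q[F,R]$ annihilated by $q$. The $q$-exterior square $G \wedge^q G$ is, by Brown's description (and the relation $\eqref{rel6}$ which forces $\widehat{[g,h]} = (g\otimes h)^q$), the universal object among $q$-central extensions generated by commutators and $q$-th powers. The key observation is that $(E_q(G))'(E_q(G))^q$ is precisely the $q$-stem of $E_q(G)$: it is generated by commutators and $q$-th powers, it is itself a $q$-central extension of $(G)'(G)^q$, and one checks it has the requisite universal property. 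I would make this precise by exhibiting mutually inverse homomorphisms between $G \wedge^q G$ and $(E_q(G))'(E_q(G))^q$, sending $g \wedge h \mapsto [g,h]$ and $\widehat{k} \mapsto k^q$ in one direction, using the defining relations $\eqref{rel1}$--$\eqref{rel6}$ modulo $\Nabla^q$ to verify these are well defined.

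For the second part I would compute $(E_q(G))'(E_q(G))^q$ directly from the presentation with generators $g_1,\dots,g_n,t_1,\dots,t_l$. Since each $t_i$ is central (relations (2),(3)) and satisfies $t_i^{d_i}=1$ with $d_i \mid q$ (relation (4)), the $t_i$ lie in the centre, so they contribute to the derived subgroup only through whatever commutators already express them, and to $(E_q(G))^q$ trivially when $d_i \mid q$ forces $t_i^q = 1$. Thus the derived subgroup is generated by the commutators $[g_i,g_j]$ together with the central $t_i$ that arise, and $(E_q(G))^q$ is generated by the $g_k^q$ modulo central corrections. Collecting, I would argue that
\[
(E_q(G))'(E_q(G))^q = \left\langle [g_i,g_j]^{\epsilon}, g_k^q \; \vert \; 1 \leq i < j \leq n,\; 1 \leq k \leq n \right\rangle,
\]
where the exponent $\epsilon$ enters because, when $G$ is infinite, one may only have the $\pm 1$ powers of commutators available as generators after accounting for the polycyclic collection, whereas in the finite case all the requisite relations are present to pin down $\epsilon = 1$.

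The delicate point, and the step I expect to be the main obstacle, is the careful handling of relation $\eqref{rel4}$ (equivalently $\eqref{reta4}$), which expresses $\widehat{kk_1}$ with the awkward product $\prod_{i=1}^{q-1}(k \otimes (k_1^{-i})^{k^{q-1-i}})$. This is exactly the relation flagged in the excerpt as demanding ``tedious calculations,'' and it is what makes the map $\widehat{k} \mapsto k^q$ interact nontrivially with the group structure rather than being a naive power map. In the exterior quotient many of these correction terms collapse because $k \otimes k \in \Nabla^q$, but verifying that the remaining terms are consistent with the central, $q$-torsion behaviour of the $t_i$ requires commutator calculus in $E_q(G)$ and careful bookkeeping. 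Since Proposition~\ref{prop} is quoted from \cite[Proposition 4.2]{DR}, I would, for the present exposition, carry out the identification of generators in full but defer the consistency verification of $\eqref{rel4}$ to the cited reference, exactly as the authors do for the analogous homomorphism $\delta$ in the discussion preceding Proposition~\ref{prop:seq}.
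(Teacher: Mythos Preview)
The paper does not prove this proposition at all: it is stated with the citation \cite[Proposition 4.2]{DR} and used as a black box, so there is no in-paper argument to compare your proposal against. Your sketch is a plausible outline of how such a proof might go, and you yourself correctly note at the end that the result is imported from \cite{DR}; for the purposes of this paper, that deferral is exactly what the authors do.
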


Hence, to obtain a presentation  for the  $q$-exterior square of a polycyclic group $G$ defined by a consistent polycyclic presentation,  we apply the standard methods to determine presentations for subgroups of polycyclic groups (see \cite{EN}).

Now, let us consider the  dihedral group  $G=D_{n}$ given by the consistent polycyclic presentation 
\[
D_{n}=<g_1,g_2 \ | g_1^2=1, g_1^{-1}g_2g_1=g_2^{n-1}, g_2^n=1>.
\] 

We will compute the $q$-exterior square of  $D_{n}$, $q$-odd. To this end we begin with the following proposition. Recall that a group $G$ is called $q$-perfect if $G = G' G^q$. 

\begin{prop}
For $q$ odd,  $D_{n}$ is $q$-perfect.
\end{prop}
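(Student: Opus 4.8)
The plan is to show that $D_n = D_n' \, D_n^q$ for odd $q$, which by the definition of $q$-perfect amounts to proving that every element of $D_n$ lies in the subgroup generated by the derived subgroup together with the $q$-th powers. First I would recall the structure of $D_n = \langle g_1, g_2 \mid g_1^2 = 1, \; g_1^{-1} g_2 g_1 = g_2^{n-1}, \; g_2^n = 1 \rangle$, where $g_2$ generates the cyclic rotation subgroup of order $n$ and $g_1$ is a reflection of order $2$. The commutator relation $g_1^{-1} g_2 g_1 = g_2^{-1}$ gives $[g_2, g_1] = g_2^{-2}$, so the derived subgroup $D_n'$ is exactly $\langle g_2^2 \rangle$, the subgroup of rotations of even exponent. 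The quotient $D_n / D_n'$ is then elementary abelian of order dividing $4$ (it is $\Z_2 \times \Z_2$ when $n$ is even and $\Z_2$ when $n$ is odd).

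The key observation is that for odd $q$ the $q$-th power map behaves well on both generators. Since $g_1$ has order $2$ and $q$ is odd, $g_1^q = g_1$, so the reflection $g_1$ is itself a $q$-th power and hence lies in $D_n^q \leqslant D_n' D_n^q$. For the rotation $g_2$, I would argue that $g_2^q$ together with $D_n'$ recovers $g_2$ modulo the derived subgroup: working in the abelianization $D_n / D_n'$, the image of $g_2$ has order dividing $2$, and since $q$ is odd the map $x \mapsto x^q$ is the identity on any group of exponent $2$. Therefore $g_2 \equiv g_2^q \pmod{D_n'}$, which places $g_2$ in $D_n' D_n^q$ as well. Thus both generators lie in $D_n' D_n^q$, and since this is a subgroup it must equal all of $D_n$.

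The cleanest way to organize this is to pass to the abelianization: I would note that $D_n' D_n^q / D_n'$ is precisely the subgroup of $D_n/D_n'$ generated by $q$-th powers, and since $D_n / D_n'$ has exponent dividing $2$ while $q$ is odd, the $q$-th power map is surjective (indeed the identity) on $D_n / D_n'$. Hence $D_n^q D_n' = D_n$, giving $q$-perfectness immediately. This reduces the entire claim to the elementary fact that odd powers act as the identity on an exponent-$2$ group, applied to the abelianization.

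I do not expect any serious obstacle here; the only point requiring a little care is the verification that $D_n / D_n'$ has exponent $2$, which follows directly from $[g_2, g_1] = g_2^{-2} \in D_n'$ forcing the image of $g_2^2$ to be trivial in the quotient. Once that is in hand, the oddness of $q$ does all the remaining work, and the statement follows with essentially no computation.
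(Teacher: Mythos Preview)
Your proof is correct and follows essentially the same route as the paper: both arguments reduce to the observations that $g_1^q = g_1$ (since $q$ is odd and $g_1^2=1$) and that $g_2$ lies in $\langle g_2^q, g_2^2\rangle = D_n^q D_n'$ because $\gcd(q,2)=1$. The paper carries this out via an explicit B\'ezout identity $qx+2y=1$, while you phrase it through the abelianisation having exponent~$2$; these are the same idea in slightly different language.
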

\begin{proof}  Let $G := D_{n}$. Then we have \[G' G^q = <g_1^q, g_2^q, [g_1, g_2]>=<g_1,g_2^q,g_1^2>.\] Since  $q$ is odd, $qx+2y=1$ for some $x,y \in \Z$. Therefore, $g_2=(g_2^q)^x (g_2^2)^y \in K'K^q.$
\end{proof}

Notice that if  $G$ is $q$-perfect then by Lemma~\ref{lem:basic}, $\Delta^q(G) = 1$ and hence $G \otimes^q G \cong G \wedge^q G$. In this case $G \otimes^q G$ can  be computed  using  Proposition~\ref{prop}. Now,  
\[
E_q(D_{n})=<g_1,g_2,t_1,t_2,t_3 | g_1^2=t_1,  g_1^{-1}g_2g_1=g_2^{n-1}t_2, g_2^3=t_2, t_i \ \ q\text{-central}, \ i=1,2,3>.
\] 
Testing the consistency of this presentation we obtain the  unique relation:  
\begin{eqnarray} \label{consitencia} 
t_2^n t_3^{n-2}=1.
\end{eqnarray} 
Computing  a consistent polycyclic presentation  for  
$W=E_q(G)'E_q(G)^q \cong D_{n} \wedge^q D_{n}$ we obtain 
$W= \langle [g_1,g_2],g_1^q,g_2^q \rangle.$ Since  $[g_1,g_2]=g_2^{-(n-2)}t_2^{-1}=(g_2^{n-2}t_2)^{-1}=[g_2,g_1]^{-1}$, we find $W=<g_2^{n-2}t_2, g_1^q,g_2^q>.$

Routine computations give us the following result, where we  write $(a,b)$ for $\gcd(a,b)$, $[a,b]$ for $\lcm(a,b)$ and $o(g)$ for the order of the element $g$:  
\begin{prop}
Let $q$ be an odd integer. Then, in $E_q(D_n)$ we have: 
	\[
	o(g_1)=2q, \ o(g_2)=[n,q], \ o(t_1)=q,\ o(t_2)=\dfrac{q}{(n-2,q)} \text{ and } o(t_3)=\dfrac{q}{(n,q)}.
	\]		
\end{prop}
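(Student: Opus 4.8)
The plan is to read off all five orders from the central-extension structure of $E_q(D_n)$. Since $E_q(G)=F/R^q[F,R]$, the images $t_1=g_1^2$, $t_2$ (coming from the conjugation relator $g_1^{-1}g_2g_1g_2^{-(n-1)}$) and $t_3=g_2^n$ generate a \emph{central} abelian subgroup $A=\langle t_1,t_2,t_3\rangle$ of exponent dividing $q$, with $E_q(D_n)/A\cong D_n$. Because the polycyclic presentation has already been checked for consistency and produced the single consistency relation \eqref{consitencia}, the subgroup $A$ is exactly the abelian group on $t_1,t_2,t_3$ presented by $t_1^q=t_2^q=t_3^q=1$ together with $t_2^n t_3^{n-2}=1$; no further collapse occurs. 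The computation thus splits into (a) determining $A$, and (b) relating $g_1,g_2$ to $A$ through the cyclic subgroups they generate.

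First I would pin down $A$. As $t_1$ appears in no relation besides $t_1^q=1$, it splits off as a direct factor, so $o(t_1)=q$ and $A=\langle t_1\rangle\times\langle t_2,t_3\rangle$. For the second factor, write it as $\Z^2/\Lambda$, where $\Lambda\leq\Z^2$ is the lattice spanned by $(q,0),(0,q),(n,n-2)$. Its Smith normal form is immediate from the minors: the $1\times1$ minors have gcd $\gcd(q,n,n-2)=1$ (here $q$ odd and $\gcd(n,n-2)\mid 2$ force this), while the $2\times2$ minors $q^2,\,q(n-2),\,-qn$ have gcd $q$; hence the invariant factors are $1,q$ and $\langle t_2,t_3\rangle\cong\Z/q$. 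The individual orders are then the orders of $(1,0)$ and $(0,1)$ modulo $\Lambda$: solving for the least $m>0$ with $(m,0)\in\Lambda$ gives $o(t_2)=(q/d)\gcd(d,n)$ with $d=\gcd(q,n-2)$, and the reduction $\gcd(d,n)=\gcd(d,n-(n-2))=\gcd(d,2)=1$ (valid since $d\mid q$ is odd) yields $o(t_2)=q/\gcd(q,n-2)$. The symmetric computation, with $e=\gcd(q,n)$ and $\gcd(e,n-2)=\gcd(e,2)=1$, gives $o(t_3)=q/\gcd(q,n)$.

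It remains to compute $o(g_1)$ and $o(g_2)$ from the cyclic subgroups $\langle g_1\rangle$ and $\langle g_2\rangle$ and their intersections with $A$. Since $g_1^2=t_1$ and $g_1\notin A$, one has $g_1^k\in A$ exactly when $k$ is even, so $\langle g_1\rangle\cap A=\langle t_1\rangle$ has order $q$ and $\langle g_1\rangle$ maps onto the order-$2$ reflection subgroup of $D_n$; being cyclic, $\langle g_1\rangle$ has order $2q$, i.e.\ $o(g_1)=2q$. Likewise $g_2^k\in A$ iff $n\mid k$, so $\langle g_2\rangle\cap A=\langle t_3\rangle$ has order $q/\gcd(q,n)$ and $\langle g_2\rangle$ surjects onto the order-$n$ rotation subgroup, whence $o(g_2)=n\cdot q/\gcd(q,n)=\lcm(n,q)=[n,q]$.

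The main obstacle lies entirely in the second paragraph, namely the correct determination of $o(t_2)$ and $o(t_3)$ inside $A$. Everything rests on the gcd simplifications $\gcd(q,n,n-2)=1$, $\gcd(\gcd(q,n-2),n)=1$ and $\gcd(\gcd(q,n),n-2)=1$, all of which are consequences of $q$ being odd together with the identity $n-(n-2)=2$; this is exactly where the hypothesis ``$q$ odd'' is used, and a careless treatment of these gcd's is the most likely source of error. Once $A$ is understood, passing to $o(g_1)$ and $o(g_2)$ is routine extension arithmetic.
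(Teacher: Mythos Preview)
Your argument is correct. The paper itself gives no proof of this proposition, stating only that ``Routine computations give us the following result''; your proposal is a clean execution of exactly those routine computations, relying (as it must) on the paper's prior consistency check that the only additional relation among the $t_i$ is $t_2^{\,n}t_3^{\,n-2}=1$. The Smith-normal-form computation for $\langle t_2,t_3\rangle$ and the gcd reductions using $n-(n-2)=2$ together with $q$ odd are carried out correctly, and the extension arithmetic for $o(g_1)$ and $o(g_2)$ is straightforward once $A$ is known.
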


\begin{prop}
	For $q$ odd, $D_{n} \otimes^q D_{n} \cong D_{n}$ and  $H_2(D_{n},\Z_q)=\{1\}.$
\end{prop}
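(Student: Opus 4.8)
The plan is to prove the final proposition by combining the explicit polycyclic presentation of $E_q(D_n)$ developed above with the order computations of the preceding proposition. Since $q$ is odd, $D_n$ is $q$-perfect, so by the remark following Proposition~\ref{prop} we have $\Delta^q(D_n) = 1$ and hence $D_n \otimes^q D_n \cong D_n \wedge^q D_n \cong W$, where $W = \langle g_2^{n-2} t_2,\, g_1^q,\, g_2^q \rangle$ sits inside $E_q(D_n)$. Thus the entire task reduces to identifying the isomorphism type of the finitely generated group $W$.

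First I would analyse the three generators of $W$ using the order data $o(g_1) = 2q$, $o(g_2) = [n,q]$, $o(t_2) = q/(n-2,q)$, together with the centrality of the $t_i$ and the consistency relation $t_2^n t_3^{n-2} = 1$. The key reduction is the elementary number-theoretic observation that when $q$ is odd, the power maps interact so that $W$ collapses onto a copy of $D_n$: I expect $g_1^q$ to play the role of a reflection-type involution and $g_2^q$ (or a suitable adjustment of $g_2^{n-2} t_2$) to generate a cyclic part of order $n$, with the conjugation relation $g_1^{-1} g_2 g_1 = g_2^{n-1}$ descending to the required dihedral relation. Concretely, I would compute the order of each generator of $W$ and the conjugation action among them, show that $g_2^{n-2} t_2$ is redundant or expressible via $g_1^q$ and $g_2^q$, and verify that the surviving two generators satisfy exactly the dihedral presentation $\langle a, b \mid a^2 = 1,\ b^n = 1,\ a^{-1} b a = b^{n-1} \rangle$. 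The oddness of $q$ enters crucially here, as it did in the $q$-perfectness proof, via identities such as $qx + 2y = 1$ that let one trade powers of $g_2$ against $g_2^q$ and $g_2^2$.

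For the second assertion, $H_2(D_n, \Z_q) = \{1\}$, I would invoke Brown's exact sequence quoted in the introduction, namely $1 \to H_2(G, \Z_q) \to G \otimes^q G \to G \to 1$, valid since $D_n$ is $q$-perfect. Having just shown $D_n \otimes^q D_n \cong D_n$, the surjection $D_n \otimes^q D_n \to D_n$ in this sequence is an isomorphism (both groups being finite of the same order $2n$), forcing its kernel $H_2(D_n, \Z_q)$ to be trivial. This gives the homological statement for free once the tensor square has been identified.

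The main obstacle will be the careful bookkeeping in the first step: establishing that $W$ really reduces to two effective generators satisfying the dihedral relations, rather than a larger group. The orders $o(g_2) = \lcm(n,q)$ and $o(g_1) = 2q$ exceed those of the corresponding dihedral generators, so one must check that passing to the relevant powers and to the subgroup $W$ genuinely cuts these down to orders $n$ and $2$ respectively, and that no extra central factor survives; this is where the consistency relation \eqref{consitencia} and the parity of $q$ must be deployed with care. The homological consequence, by contrast, should be immediate once the isomorphism $D_n \otimes^q D_n \cong D_n$ is in hand.
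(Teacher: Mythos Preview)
Your proposal is correct and follows essentially the same route as the paper: reduce to $W \leq E_q(D_n)$ via $q$-perfectness, use the B\'ezout relation $qx+2y=1$ to manufacture an order-$n$ cyclic generator inside $W$, verify the dihedral relations against $g_1^q$, and then read off $H_2(D_n,\Z_q)=1$ from Brown's universal $q$-central extension. The one point to sharpen is that neither $g_2^q$ nor $g_2^{n-2}t_2$ alone has order $n$; the paper takes the specific combination $h:=(g_2^q)^x(g_2^{n-2}t_2)^{-y}=g_2(t_2t_3)^{-y}$ and checks $o(h)=n$, then shows $\langle g_1^q,h\rangle=W$---this is exactly the ``suitable adjustment'' you anticipate.
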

\begin{proof} As above, we have that $D_{n}\otimes^q D_{n} \cong W =\langle g_2^{n-2}t_2, g_1^q,g_2^q \rangle.$ We need to show that $W \cong D_{n}$. Since $q$ is odd,  $qx+2y=1$ for some $x,y \in \Z$. Set $h:=(g_2^q)^x(g_2^{n-2}t_2)^{-y}=g_2(t_2t_3)^{-y} \in  W$. Thus, $g_1^{-q}hg_1^q=h^{-1}$ and $h^n=1$. Moreover, since $(q,y)=1$ and $o(g_2)\geq n$, it follows that $g_2^l\neq 1$ and $g_2^l\neq t_3$ for all $l<n$. Therefore, $o(h) \leq n$ and thus, $o(h)=n$. On the other hand, setting $H:=\langle h \rangle$ we find that  $g_2^q=h^q \in H$, $(g_2^{n-2}t_2)^{-y}=(g_2^q)^{-x}h\in H$ and 
$g_2^{n-2}t_2=\{(g_2^{n-2}t_2)^{-y}\}^{-b}(g_2^q)^{-2a}\in H$, where $a,b$ are integers such that $qa+yb=1$. Therefore,
\[
W = \langle g_1^q, h \; \mid \; (g_1^q)^2=1,  \ g_1^{-q}hg_1^q=h^{-1}, \ h^n=1 \rangle \cong D_{n}. 
\]
\end{proof}

\section{Exponents of the $q$-tensor square of nilpotent groups of class $\leq 3$} 

Moravec \cite{Moravec1} gives an estimate  for $\exp(G \otimes G)$  in terms of  $\exp(G)$  and $\exp(G \wedge G)$ and he observed that  for finite metabelian groups,  $\exp (G \wedge G)$ divides $(\exp G)^2 \exp G'$; consequently, 
$\exp(G \otimes G)$ divides $(\exp G)^3 \exp G'$. For finite nilpotent groups of class $\leq 3$ he proved that 
$\exp(G \otimes G)$ divides $\exp G$ (cf. \cite[Theorem 2]{Moravec1}). In this section we show that this upper bound can be extended to the $q$-tensor square of finite nilpotent groups of class $\leq 3,$ $q \geq 0.$ 
\begin{lem} \label{lem:powers}
Let $G$ be a nilpotent group of class $\leq 3$ and let $q \geq 1.$ Then
\begin{itemize}
    \item[(i)] $[K, \gamma_3(\nu^q(G))] = 1;$
    \item[(ii)] if $t = \prod_{i =1}^r [x_i, y_i^\varphi]^{\epsilon_i}$ is an arbitrary element in $T = [G, G^\varphi],$ where $r \geq 1$ and $\epsilon_i = \pm 1$ for $i = 1, \ldots, r,$ then, for all $\widehat{k} \in \widehat{\cK}$ we have
    \[ 
    [t, \wk] = \prod_{i=1}^r [x_i, y_i^\varphi, \wk]^{\epsilon_i} = \prod_{i=1}^r [x_i, y_i^\varphi, k^q]^{\epsilon_i} = [t, k]^q;
    \]
       \item[(iii)] for all positive integers $n$ and $t, \wk$ as in part (ii), we have,
       \[
       (t \wk)^n = t^n [t,k^{q}]^{-  \binom{n}{2}} (\wk)^n = t^n [t,k^{- q \binom{n}{2}}] (\wk)^n
       \]
\end{itemize}
\end{lem}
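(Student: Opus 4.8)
The whole argument specialises the general theory of Section~\ref{sec:2} through two facts. First, by Remark~\ref{rem:nilclass} the group $\nu^q(G)$ is nilpotent of class at most $4$, so $\gamma_5(\nu^q(G)) = 1$ and every commutator of total weight exceeding $4$ vanishes. Second, relation~\eqref{reta6} gives $[a,b^\varphi]^q = \widehat{[a,b]}$ for every basic tensor, so a basic tensor $[a,b^\varphi]$ with $[a,b]\in\gamma_4(G)$ is $q$-torsion: since $\gamma_4(G)=1$ we get $[a,b^\varphi]^q = \widehat{1} = 1$. Call an element of $T=[G,G^\varphi]$ of the form $[a,b^\varphi]$ with $a\in\gamma_i(G),\,b\in\gamma_j(G),\,i+j\geq 4$ a \emph{weight-$4$ tensor}; these are exactly the tensors killed by the $q$-th power map. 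The plan is to reduce every commutator with $\wk$ to weight-$4$ tensors and their powers, and then to invoke these two facts.

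For part (i), I would first use Proposition~\ref{prop:structure2}, which gives $\gamma_2(\nu^q(G)) = T\,G'\,(G')^\varphi$, so that $\gamma_3(\nu^q(G))$ is generated by left-normed triple commutators in $G\cup G^\varphi\cup\widehat{\cK}$; by Lemma~\ref{lem:basic}(ii),(iv),(v) each such generator rewrites as a weight-$3$ element lying in $\gamma_3(G)$, in $\gamma_3(G)^\varphi$, or in $T$. For such a generator $z$ it suffices to show that $\wk$ centralises $z$. Now \eqref{reta3} says conjugation by $\wk$ agrees on $T$ with conjugation by $k^q$, while Lemma~\ref{lem:basic}(v) gives $[g,\wk]=[g,(k^q)^\varphi]$ for $g\in G$; hence $[z,\wk]$ equals $[z,k^q]=[z,k]^q$ or $[z,(k^q)^\varphi]=[z,k^\varphi]^q$, the dropped corrections having weight $5$. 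A last use of Lemma~\ref{lem:basic}(ii) rewrites this as the $q$-th power of a weight-$4$ basic tensor $[w,k^\varphi]$ with $w\in\gamma_3(G)$, and $[w,k^\varphi]^q=\widehat{[w,k]}=\widehat{1}=1$ since $[w,k]\in\gamma_4(G)=1$. Thus $[\wk,z]=1$ for every generator, proving (i).

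For part (ii), I would expand $[t,\wk]=[\prod_i a_i,\wk]$ with $a_i=[x_i,y_i^\varphi]^{\epsilon_i}$ via $[uv,c]=[u,c]^v[v,c]$: each factor $[a_i,\wk]$ lies in $\gamma_3(\nu^q(G))$ while the conjugating elements lie in $T\subseteq\gamma_2$, so the corrections lie in $[\gamma_3,\gamma_2]\subseteq\gamma_5=1$ and disappear (the case $\epsilon_i=-1$ being handled the same way). This gives the first equality $[t,\wk]=\prod_i[x_i,y_i^\varphi,\wk]^{\epsilon_i}$, and the second is immediate from Lemma~\ref{lem:basic}(iv). For the third I would use the collection identity $[c,k^q]=[c,k]^q\,[c,k,k]^{\binom{q}{2}}$ for $c=[x_i,y_i^\varphi]$ (the higher terms having weight $\geq 5$), and note that, via Lemma~\ref{lem:basic}(ii), $[c,k,k]=[w,k^\varphi]$ is a weight-$4$ tensor with $w=[[x_i,y_i],k]\in\gamma_3(G)$. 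Since $[w,k]=1$, Lemma~\ref{lem:basic}(vi) makes $[w,k^\varphi]$ central of order dividing $q$, so $[c,k,k]^{\binom{q}{2}}=\bigl([c,k,k]^q\bigr)^{(q-1)/2}=1$ for $q$ odd; the remaining parity requires the sharper vanishing of the tensors $[\gamma_3(G),G^\varphi]$ in $\nu^q(G)$. Granting this, the factors $[x_i,y_i^\varphi,k]$ commute (their commutators have weight $\geq 5$), whence $\prod_i[x_i,y_i^\varphi,k^q]^{\epsilon_i}=\prod_i[x_i,y_i^\varphi,k]^{q\epsilon_i}=[t,k]^q$.

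Finally, for part (iii) I would apply the Hall--Petrescu collapse $(xy)^n=x^ny^n[y,x]^{\binom{n}{2}}$, valid once $[y,x]$ is central, to $x=t$ and $y=\wk$. Here $[\wk,t]=[t,\wk]^{-1}$ is central: it lies in $[T,K]\subseteq\gamma_3(\nu^q(G))$, hence commutes with $\wk$ by part (i) and with $t\in\gamma_2$ since $[\gamma_3,\gamma_2]\subseteq\gamma_5=1$. Substituting $[t,\wk]=[t,k^q]$ from part (ii) and moving the central factor past $(\wk)^n$ yields $(t\wk)^n=t^n[t,k^q]^{-\binom{n}{2}}(\wk)^n$, and rewriting $[t,k^q]^{-\binom{n}{2}}=[t,k]^{-q\binom{n}{2}}=[t,k^{-q\binom{n}{2}}]$ (again modulo weight-$4$ tensors, which vanish as above) gives the second displayed form. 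The main obstacle throughout is precisely the control of the weight-$4$ tensors $[\gamma_3(G),G^\varphi]$: part (i) only ever encounters their $q$-th powers, which vanish cleanly through \eqref{reta6} and $\gamma_4(G)=1$, whereas parts (ii) and (iii) encounter them to the exponent $\binom{\cdot}{2}$, and the careful verification that these contributions vanish in $\nu^q(G)$ is where the real work lies.
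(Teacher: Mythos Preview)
Your overall architecture --- class~$\leq 4$ for $\nu^q(G)$, commutator expansion with corrections in $\gamma_5=1$, then a Hall--Petrescu collapse --- is exactly the paper's.  Two points of comparison are worth making.

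\medskip
\textbf{Part (i).}  Your route through weight-$4$ tensors and relation~\eqref{reta6} is correct but more elaborate than what the paper does.  The paper simply observes that, by \eqref{reta1}, \eqref{reta2} and Lemma~\ref{lem:basic}(iv), conjugating $\wk$ by any decorated triple commutator $[x^{\alpha},y^{\beta},z^{\gamma}]$ with $\alpha,\beta,\gamma\in\{1,\varphi\}$ has the same effect as conjugating by the undecorated $[x,y,z]\in\gamma_3(G)$; since $\gamma_4(G)=1$, this leaves $\wk$ fixed.  No appeal to $q$-th powers or to \eqref{reta6} is needed, and the description $\gamma_3(\nu^q(G))=[\gamma_2(G),G^\varphi]\,\gamma_3(G)\,\gamma_3(G)^\varphi$ (which you invoke via Proposition~\ref{prop:structure2}) is quoted in the paper from \cite[Proposition~2.7]{BR}.

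\medskip
\textbf{Parts (ii) and (iii).}  Your treatment of the first two equalities in (ii) and the expansion in (iii) matches the paper's line for line.  For the third equality in (ii) you honestly flag a gap: the correction term $[c,k,k]^{\binom{q}{2}}$ with $c=[x_i,y_i^\varphi]$ is a weight-$4$ tensor of order dividing $q$, and for even $q$ the exponent $\binom{q}{2}$ is not a multiple of $q$, so your argument covers only odd $q$.  The paper does not spell this out either: it disposes of $[x_i,y_i^\varphi,k^q]=[x_i,y_i^\varphi,k]^q$ in a single phrase, ``making use of part~(i)'', with no further calculation.  So consulting the paper's proof will not resolve your concern; if your aim is to reproduce the paper's argument you may invoke part~(i) and move on, but if your aim is a fully justified step the issue you identified remains to be addressed.
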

\begin{proof}
  (i). By \cite[Proposition 2.7]{BR} we know that $\gamma_j(\nu^q(G)) = [\gamma_{j-1}(G), G^\varphi] \gamma_j(G) \gamma_j(G^\varphi)$, for all $j\geq 2$. This implies that $\nu^q(G)$ has nilpotency class at most 4. Now let $\wk \in \widehat{\cK}$ be any generator of $K$. From defining relations \eqref{reta1}, \eqref{reta2} and  Lemma~\ref{lem:basic} (iv), we see that conjugation of $\wk$ by any  commutator $[x^{\alpha}, y^{\beta}, z^{\gamma}] \in \gamma_3(\nu^q(G))$, where $\alpha, \beta, \gamma \in \{1, \varphi \},$ is the same as conjugating $\wk$ by the commutator  $[x, y, z] \in \gamma_3(G).$ This shows that $\gamma_3(\nu^q(G))$ centralizes $K$ if $G$ has nilpotency class $\leq 3.$ 
  
  \noindent
  (ii). The first equality follows from  commutator calculus and induction on $r$, since $[\gamma_3(\nu^q(G)), \gamma_2(\nu^q(G))] \leq  \gamma_5(\nu^q(G)) = 1,$ as $\nu^q(G)$ has class $\leq 4.$ The second equality follows from the identity $[x, y^\varphi, \wk]= [x, y^\varphi, k^q],$ according to Lemma~\ref{lem:basic} (iv), while the last one is obtained by the way back, making use of part (i) to write $[x_i, y_i^{\varphi}, k^q] = [x_i, y_i^{\varphi}, k]^q$ for $i = 1, \ldots, r.$
  
  \noindent
  (iii). We expand $(t \wk)^n$ by induction on $n$, collecting commutators in the middle, to get 
  \begin{equation} \label{eq:powers}
  (t \wh)^n = t^n \prod_{i=1}^{n-1}([t, (\wk)^{ -i}]^{t^{n-1-i}}) (\wk)^n.
  \end{equation}
  Since $[t, (\wk)^{-i}] \in \gamma_3(\nu^q(G))$, we see that $[t, (\wk)^{ -i}]^{t^{n-1-i}} = [t, (\wk)^{ -i}]$, $i = 1, \ldots,{n-1}.$ In addition, by parts (i) and (ii), $[t, (\wk)^{ -i}] = [t, (\wk)]^{ -i} = [t, k^q]^{ -i} = ([t, k]^{-q})^i.$ Consequently, $\prod_{i=1}^{n-1}([t, (\wk)^{ -i}]^{t^{n-1-i}}) = [t, k]^{-q \binom{n}{2}}$ which, by part (i) and induction, is also equal to $[t, k^{-q \binom{n}{2}}]$ since $[t, k] \in \gamma_3(\nu^q(G)).$ This completes the proof. 
\end{proof}

\begin{thm} \label{thm:expclass3}  Let $G$ be a finite nilpotent group of class $\leq 3$ with $\exp G = n$ and let $q \geq 0.$  Then, 
	\begin{itemize}
	\item[(i)] $\exp (G \otimes^{q} G)$ divides  $\exp G$ if either $n$ is odd or $4 \mid q;$ 
	\item[(ii)] $\exp (G \otimes^{q} G)$ divides $2 \exp G$, otherwise.
	\end{itemize}
\end{thm}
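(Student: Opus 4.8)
The plan is to establish the theorem by working inside the group $\nu^q(G) = \eta^q(G,G)$ and showing that every generator of $G \otimes^q G \cong \Upsilon^q(G,G) = TK$ has order dividing (a suitable multiple of) $n = \exp G$, where $T = [G, G^\varphi]$ and $K$ is the ``hat'' subgroup. Since $\Upsilon^q(G,G)$ is generated by the elements $[x, y^\varphi]$ and $\widehat{k}$, and since by Proposition~\ref{prop:structure2} and the nilpotency of $G$ the subgroup $T$ is a bounded-class nilpotent group, the first reduction is to control the exponent of $T$ alone. For the tensor part, I would invoke Moravec's result \cite[Theorem 2]{Moravec1}, which handles the case $q = 0$: for a finite nilpotent group $G$ of class $\leq 3$, $\exp(G \otimes G)$ divides $\exp G$. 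Via the canonical epimorphism $\delta \colon \eta^q(G,G) \to \eta(G,G)$ of Proposition~\ref{prop:seq} (sequence \eqref{seq:023} with $q=0$ read in reverse, or rather the case $p = q$ of the general $\delta$), the tensor square relations transfer, so the commutator generators $[x, y^\varphi]$ already satisfy the desired power bound coming from the $q = 0$ case; the new content of the theorem is entirely the contribution of the hat generators $\widehat{k}$ and their interaction with $T$.

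Next I would compute $(\widehat{k})^n$ and, more generally, the order of a product $t\widehat{k}$ using Lemma~\ref{lem:powers}(iii), which gives
\begin{equation} \label{eq:plan1}
(t\widehat{k})^n = t^n [t, k^q]^{-\binom{n}{2}} (\widehat{k})^n.
\end{equation}
The factor $t^n$ is controlled by the $q = 0$ bound on $T$, and the middle correction term $[t, k^q]^{-\binom{n}{2}}$ lies in $\gamma_3(\nu^q(G))$ by Lemma~\ref{lem:powers}(i)--(ii), so it too is annihilated by a power of $n$. The crux is therefore to bound the order of $(\widehat{k})^n$ itself. From defining relation \eqref{reta6}, $\widehat{[g,h]} = [g, h^\varphi]^q$, and from relation \eqref{reta4} one controls how $\widehat{k}$ raised to powers unwinds into tensor generators; combining these with Lemma~\ref{lem:basic}(vi)--(ix) (which bound orders of the central elements $[k, k_1^\varphi]$ by divisors of $q$) should show that $(\widehat{k})^n$ is expressible as a product of powers of tensors times a central element whose order divides $\gcd(q, n)$. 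This is where the case distinction arises: the binomial coefficient $\binom{n}{2} = n(n-1)/2$ carries a factor of $2$ in the denominator precisely when $n$ is even and $q$ is not divisible by $4$, forcing the extra factor of $2$ in part (ii).

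Concretely, I would treat the parity of $n$ against the $2$-divisibility of $q$. When $n$ is odd, $\binom{n}{2} = n \cdot \frac{n-1}{2}$ is divisible by $n$, so the correction term in \eqref{eq:plan1} is already an $n$-th power and vanishes upon raising to the exponent, giving the clean bound $\exp G$. When $4 \mid q$, the exponent-$q$ constraints on the hat elements (relations \eqref{reta5}, \eqref{reta6} and Lemma~\ref{lem:basic}(vi)) absorb the wayward factor of $2$, again yielding $\exp G$. In the remaining case ($n$ even, $4 \nmid q$), the factor of $2$ cannot be eliminated and one only obtains that the $n$-th powers square to the identity in the relevant quotient, whence $2\exp G$. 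I would assemble these by showing $(t\widehat{k})^{n}$ or $(t\widehat{k})^{2n}$ equals the identity in each case, using that $\gamma_3(\nu^q(G))$ is central modulo $\gamma_4$ and that $\nu^q(G)$ has class $\leq 4$ (Lemma~\ref{lem:powers}).

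The main obstacle I anticipate is the careful bookkeeping of the central correction term $[t, k^q]^{-\binom{n}{2}}$ and its parity behaviour: one must verify that the factor of $2$ in $\binom{n}{2}$ interacts correctly with both $\exp G$ and the order constraints (dividing $q$) coming from Lemma~\ref{lem:basic}(vi), and that no hidden contribution from $\gamma_4(\nu^q(G))$ survives. Making the parity analysis airtight across all three subcases, rather than the power computations themselves, will be the delicate point, since it is exactly the arithmetic of $\binom{n}{2}$ modulo the $2$-part of $\gcd(q, n)$ that separates conclusion (i) from conclusion (ii).
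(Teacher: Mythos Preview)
Your plan is essentially the paper's own argument: decompose elements of $\Upsilon^q(G)$ as $t\widehat{k}$, invoke Moravec for $t^n = 1$, expand $(t\widehat{k})^n$ via Lemma~\ref{lem:powers}(iii), and then separately control the middle correction term and $(\widehat{k})^n$ by a parity analysis. The one ingredient you allude to but do not state is the explicit identity obtained by induction from relation~\eqref{reta4}, namely $\widehat{k^n} = (\widehat{k})^n [k,k^\varphi]^{-\binom{q}{2}\binom{n}{2}}$, which gives $(\widehat{k})^n = [k,k^\varphi]^{\binom{q}{2}\binom{n}{2}}$; once you have this, the case split becomes purely arithmetic (does $\gcd(q,n)$ divide $\binom{q}{2}\binom{n}{2}$?) and the paper handles it exactly as you anticipate, while for the middle term the paper rewrites $[t,k^q]^{-\binom{n}{2}} = [t, k^{-q\binom{n}{2}}]$ and observes directly that $k^{q\binom{n}{2}} = 1$ when $n$ is odd or $q$ is even, which is slightly cleaner than arguing via the exponent of $T$.
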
  
\begin{proof}
We use the isomorphism $G \otimes^q G \cong \Upsilon^q(G) = T K$, where $T = [G, G^\varphi]$, to work inside $\nu^q(G).$ 
As already observed in the proof of Proposition~\ref{prop:epicentral}, by defining relations \eqref{reta3} and \eqref{reta4} we see that $\widehat{\cK}$ provides a right transversal for $T$ in $T K$, that is, 
$T K = \{ t \wk \; \mid \; t \in [G, G^\varphi], k \in \cK \}.$ So, all we need is to control the orders of an arbitrary element $t \wk \in \Upsilon^q(G).$ 

\noindent
(i). Firstly we consider the case $q = 0.$ Here we have $\nu^0(G) = \nu(G)$ and $G \otimes^0 G = G \otimes G,$ the non-abelian tensor square of $G$. So, this case is dead by \cite[Theorem 2]{Moravec}: $\exp (G \otimes G)$ divides $\exp G.$ 
Thus, suppose $q \geq 1$ and let $n : = \exp G.$ 
By part (iii) of Lemma~\ref{lem:powers} we have 
\begin{equation} \label{eq:final}
(t \wk)^n = t^n [t, k^{- q \binom{n}{2}}] (\wk)^n = (\wk)^n,
\end{equation}
if $n$ is odd or if $q$ is even. Notice that $t \in [G, G^\varphi]$ and so, $t^n = 1$ for all $n$, by the case $q=0$ and \eqref{seq:023}). Now, an induction on $n$ using relation \eqref{reta4} gives 
\[
1 = \widehat{(k^n)} = (\wk)^n [k, k^\varphi]^{- \binom{q}{2} \binom{n}{2}},
\]
which implies that $(\wk)^n = [k, k^\varphi]^{\binom{q}{2} \binom{n}{2}}.$ Thus, if $n$ is odd or if $4 \mid q,$ then $(\wk)^n = 1$ and, consequently, $(t \wk)^n = 1.$

\noindent
(ii). In the case $n$ even and $4 \nmid q$ then certainly we get, from \eqref{eq:final} and Lemma~\ref{lem:powers} (i), 
\[
(t \wk)^{2n} = ([t, k^{- q \binom{n}{2}}] (\wk)^n)^2 =  [t, k^{- q \binom{n}{2}}]^2 ((\wk)^n)^2 = [t, k^{-q n(n-1)}] [k, k^\varphi]^{\binom{q}{2} n (n-1)} = 1.
\]
This completes the proof. 
\end{proof}

\begin{ex} The third of the following simple examples borrowed from \cite[Theorem 3.1 and Table 1]{BR} shows that the bound in part (ii) of Theorem~\ref{thm:expclass3} can be attained in the simplest situation, of a cyclic group.   
\begin{enumerate}
\item $D_4 \otimes^4 D_4 \cong C_2^5 \times C_4$; 
\item $Q_8 \otimes^4 Q_8 \cong C_2^4 \times C_4^2$; 
\item $C_n \otimes^q C_n \cong C_{2n} \times C_s$ if  $q, n  \equiv 2 \pmod 4$ and $\gcd(q,n) = 2s$.
\end{enumerate}
\end{ex}


\renewcommand{\refname}{REFERENCES}

\end{document}